\newtheorem{theorem}{Theorem}[section]
\newtheorem{lemma}[theorem]{Lemma}
\newtheorem{corollary}[theorem]{Corollary}
\theoremstyle{definition}
\theoremstyle{remark}
\numberwithin{equation}{section}
\newcommand{\mmod}[1]{\,\,(\text{mod}\,\,#1)}
  \def\bff{{\mathbf f}}
\def\bfh{{\mathbf h}}
\def\bfm{{\mathbf m}}
\def\bfn{{\mathbf n}}
\def\bfu{{\mathbf u}}
\def\bfw{{\mathbf w}}
\def\bfx{{\mathbf x}}
\def\bfy{{\mathbf y}}
\def\bfz{{\mathbf z}}
\def\calB{{\mathcal B}} 
\def\calC{{\mathcal C}} 
\def\calD{{\mathcal D}}
\def\calN{{\mathcal N}}
\def\calR{{\mathcal R}}
\def\atil{\tilde{a}}\def\btil{\tilde{b}}
\def\dbC{{\mathbb C}}\def\dbN{{\mathbb N}}
\def\dbR{{\mathbb R}}
\def\dbZ{{\mathbb Z}}
\def\gra{{\mathfrak a}}
\def\grb{{\mathfrak b}}
\def\grf{{\mathfrak f}}\def\grF{{\mathfrak F}}
\def\grJ{{\mathfrak J}}
\def\grS{{\mathfrak S}}
\def\alp{{\alpha}} \def\bfalp{{\boldsymbol \alpha}}
\def\bet{{\beta}}  \def\bfbet{{\boldsymbol \beta}}
\def\gam{{\gamma}} 
\def\del{{\delta}} \def\Del{{\Delta}} 
\def\zet{{\zeta}} \def\bfzet{{\boldsymbol \zeta}} 
\def\tet{{\theta}}  \def\Tet{{\Theta}}
\def\kap{{\kappa}}
\def\lam{{\lambda}} \def\Lam{{\Lambda}} 
\def\bfxi{{\boldsymbol \xi}}
\def\sig{{\sigma}}
\def\ome{{\omega}} \def\Ome{{\Omega}}
\def\d{{\partial}}
\def\eps{\varepsilon}
\def\le{\leqslant} \def\ge{\geqslant}
\def\d{{\,{\rm d}}}
\def\llbracket{\lbrack\;\!\!\lbrack} \def\rrbracket{\rbrack\;\!\!\rbrack}
\begin{document}
\title[Vinogradov's mean value theorem]{The cubic case of the main conjecture in Vinogradov's mean 
value theorem}
\author[Trevor D. Wooley]{Trevor D. Wooley}
\address{School of Mathematics, University of Bristol, University Walk, Clifton, Bristol BS8 1TW, United 
Kingdom}
\email{matdw@bristol.ac.uk}
\subjclass[2010]{11L15, 11L07, 11P55}
\keywords{Exponential sums, Hardy-Littlewood method}
\date{}
\begin{abstract} We apply a variant of the multigrade efficient congruencing method to estimate 
Vinogradov's integral of degree $3$ for moments of order $2s$, establishing strongly diagonal behaviour 
for $1\le s\le 6$. Consequently, the main conjecture is now known to hold for the first time in a case of 
degree exceeding $2$.\end{abstract}
\maketitle

\section{Introduction} When $k$ and $s$ are natural numbers, and $X$ is a large real number, denote by 
$J_{s,k}(X)$ the number of integral solutions of the system
\begin{equation}\label{1.1}
x_1^j+\ldots +x_s^j=y_1^j+\ldots +y_s^j\quad (1\le j\le k),
\end{equation}
with $1\le x_i,y_i\le X$ $(1\le i\le s)$. The {\it main conjecture} in Vinogradov's mean value theorem 
asserts that for each $\eps>0$, one has
\begin{equation}\label{1.2}
J_{s,k}(X)\ll X^\eps (X^s+X^{2s-\frac{1}{2}k(k+1)}),
\end{equation}
an estimate that, but for the presence of the factor $X^\eps$, would be best possible (see 
\cite[equation (7.4)]{Vau1997}). Despite eighty years of intense investigation, such an estimate has been 
established only in two cases, namely the (trivial) linear case with $k=1$, and the quadratic case with 
$k=2$ in which the elementary theory of quadratic forms can be brought to bear. Our goal in this paper is 
the first proof of the main conjecture (\ref{1.2}) in a case with $k>2$.

\begin{theorem}\label{theorem1.1} For each $\eps>0$, one has $J_{s,3}(X)\ll X^\eps (X^s+X^{2s-6})$.
\end{theorem}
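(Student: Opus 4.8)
The plan is to reduce the theorem to the single critical estimate $J_{6,3}(X)\ll X^{6+\eps}$ and to establish the latter by the multigrade efficient congruencing method. First I would record the reduction. Writing $f_3(\bfalp;X)=\sum_{1\le x\le X}e(\alp_1x+\alp_2x^2+\alp_3x^3)$, so that $J_{s,3}(X)=\int_{[0,1)^3}|f_3(\bfalp;X)|^{2s}\,\d\bfalp$, Hölder's inequality yields $J_{s,3}(X)\le J_{6,3}(X)^{s/6}\ll X^{s+\eps}$ for $1\le s\le 6$, while for $s>6$ the elementary inequality $J_{s+1,3}(X)\le X^2J_{s,3}(X)$ --- obtained by fixing the final pair of variables and invoking the fact that the diagonal maximises the number of solutions of a translate of (\ref{1.1}) --- iterates to $J_{s,3}(X)\le X^{2(s-6)}J_{6,3}(X)\ll X^{2s-6+\eps}$. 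Since $s=6$ is exactly the exponent at which the two terms in (\ref{1.2}) balance (here $\tfrac12 k(k+1)=6$), the target $X^{6+\eps}$ is the crucial one; writing $\lam$ for the least real number with $J_{6,3}(X)\ll X^{\lam+\eps}$ and setting $\del=\lam-6$, the diagonal solutions force $\del\ge 0$ and it remains to prove $\del\le 0$.

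To estimate $J_{6,3}(X)$ I would erect the efficient congruencing apparatus, systematically exploiting the translation-dilation invariance of (\ref{1.1}). Fix a large integer $\Lam$ and an auxiliary prime $p$ of size $p\asymp X^{1/\Lam}$ (in practice one averages over a dyadic range of such primes). For $c\in\dbN$ and $\xi\in\dbZ$, put $f_c(\bfalp;\xi)=\sum_{\substack{1\le x\le X\\ x\equiv\xi\mmod{p^c}}}e(\alp_1x+\alp_2x^2+\alp_3x^3)$, and consider mixed mean values built from these, the prototype being $\int_{[0,1)^3}|f_a(\bfalp;\xi)|^2|f_b(\bfalp;\eta)|^{10}\,\d\bfalp$: this counts solutions of (\ref{1.1}) with $s=6$ in which one $x$-variable and one $y$-variable lie in a common class modulo $p^a$, the remaining ten variables lie in a common class modulo $p^b$, and the two classes are distinct modulo $p$. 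The multigrade refinement carries along a whole family of such quantities indexed by several block scales $b$, recombined with carefully chosen weights. The two workhorses are a \emph{conditioning lemma}, which via Hölder's inequality and pigeonholing over residue classes passes from $J_{6,3}(X)$ to these well-spaced mixed moments at the cost of a controlled power of $p$, and a \emph{congruencing lemma}, which uses the congruences $\sum_i x_i^j\equiv\sum_i y_i^j\mmod{p^{jb}}$ $(1\le j\le 3)$, Hensel's lemma, and the distinctness of residues modulo $p$ to confine the second block to a much finer arithmetic progression --- in effect multiplying its block scale by $k=3$. Chaining conditioning and congruencing and extracting the powers of $X$ and $p$ produces a recursive inequality bounding $\del$ by a strictly contracted multiple of itself plus an error tending to $0$ as $\Lam\to\infty$.

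Iterating this recursion $n$ times and letting $n\to\infty$, while optimising the schedule of block scales used at each stage, should force $\del\le 0$, hence $\del=0$ and $J_{6,3}(X)\ll X^{6+\eps}$. I expect the heart of the matter to be precisely this optimisation for the exact parameters $k=3$, $s=6$: the single-grade method yields only $\lam\le 6+c$ for some positive $c$, and it is the multigrade structure --- a judiciously weighted mixture of block scales matched to an optimal choice of the Hölder exponents in the conditioning step --- that must push the contraction factor below the threshold needed to annihilate the excess completely, equivalently must arrange that the associated linear-programming relation among the iterated exponents has optimal value exactly $6$. The remaining issues are essentially bookkeeping: isolating the diagonal and degenerate solutions, which contribute only the admissible $X^6$; handling the initialisation and the termination of the iteration once $p^{nb}$ becomes comparable with $X$; and, in the conditioning step, the pruning needed to secure an auxiliary prime of the required size.
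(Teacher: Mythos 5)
Your reduction of the theorem to the single estimate $J_{6,3}(X)\ll X^{6+\eps}$ is correct and is exactly the paper's: H\"older's inequality for $1\le s\le 6$, and the trivial bound $|f(\bfalp;X)|\le X$ (equivalently your iterated inequality $J_{s+1,3}(X)\le X^2J_{s,3}(X)$) for $s>6$. The framework you then describe for the critical case --- conditioning, efficient congruencing via Hensel's lemma at the scales $p^{jb}$, a multigrade recombination of several block scales, and an iteration driving the excess exponent to zero --- is also the paper's framework.

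The gap is at the one point you flag as ``the heart of the matter'' and then defer to an expected optimisation. The paper is explicit that the multigrade efficient congruencing method as it stands in \cite{Woo2014b} \emph{narrowly misses} $J_{6,3}(X)\ll X^{6+\eps}$: for $k=3$ the two-term recursion governing the weighted block scales has, at the critical point, characteristic root $\tet_+=4$ exactly, so the quantity $s_0$ controlling the contraction satisfies only $s_0\ge 4$ rather than $s_0>4$, and the iteration fails to force $\Lam\le 0$. Optimising the H\"older weights within the standard scheme does not repair this; the linear programme you allude to is degenerate, not merely delicate. The paper's new idea is a deliberate perturbation of the congruencing step: in the $m=1$ branch the new block scale $b'$ is taken not equal to $2b-a$ but anywhere in $[2b-a,\,2b-a+\Del(b-a)]$, accepting in Lemma 3.2 the weakened count $B_{a,b}^{2,h/b}(p)\le 6p^{h-2b+a}$ for the auxiliary congruences (a genuine loss of a power of $p$), while simultaneously inflating the normalisation of the mean values to $(X/M^a)^{m+\Del}(X/M^b)^{6-m}$. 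The loss $M^{d}$ with $d\le\Del(b-a)$ is then exactly absorbed by the gain $M^{\Del(a-b)}$ from the normalisation (the computation $\Ome=d+\Del(a-b)\le 0$ in Lemma 5.2), and the recursion's roots move to $\tet_\pm=\tfrac{1}{3}\bigl(7+2\Del\pm\sqrt{25+4\Del+4\Del^2}\bigr)$, so that $\tet_+>4+\tfrac{2}{3}\Del$ strictly and hence $s_0>4$. Without this (or an equivalent) mechanism your iteration terminates with $\lam\le 6+c$ for some fixed $c>0$, not with $\lam=6$; so the proposal as written does not yet prove the theorem.
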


The estimate for $J_{s,3}(X)$ recorded in this theorem, which establishes the main conjecture in 
Vinogradov's mean value theorem in the cubic case $k=3$, goes substantially beyond the estimates 
available hitherto. By means of Newton's formulae concerning the roots of polynomials, it is apparent that 
$J_{s,3}(X)=s!X^s+O(X^{s-1})$ for $1\le s\le 3$, since the solutions of (\ref{1.1}) are then simply the 
diagonal ones with $\{x_1,\ldots ,x_s\}=\{y_1,\ldots ,y_s\}$. Moreover, from 
\cite[Theorem 1.5]{VW1995} one has
$$J_{4,3}(X)=4!X^4+O(X^{10/3}(\log 2X)^{35}).$$
These estimates confirm (\ref{1.2}) for $1\le s\le 4$ in a particularly strong form when $k=3$, though in 
the latter range the estimate (\ref{1.2}) has been known since at least the time of Hua \cite{Hua1947}. 
Meanwhile, it follows from \cite[Theorem 7]{Hua1947} that when $s\ge 8$, then one has
\begin{equation}\label{1.3}
J_{s,3}(X)\ll X^{2s-6+\eps},
\end{equation}
a conclusion very recently improved in \cite[Corollary 1.2]{Woo2014a} to the extent that (\ref{1.3}) is 
now known to hold for $s\ge 7$. The situations with $s=5$ and $6$ have, however, thus far defied 
resolution.\par

Our strategy for proving Theorem \ref{theorem1.1} is based on the multigrade efficient congruencing 
method introduced in our recent  work \cite{Woo2014a}, and further developed in \cite{Woo2014b}. 
Indeed, the second of these papers shows that, when $k$ is sufficiently large, one has the bound 
$J_{s,k}(X)\ll X^{s+\eps}$ for $1\le s\le \tfrac{1}{2}k(k+1)-\tfrac{1}{3}k+o(k)$, narrowly missing a 
proof of the main conjecture (\ref{1.2}) throughout the critical interval $1\le s\le \tfrac{1}{2}k(k+1)$. A 
careful inspection of the methods underlying the proof of this result shows, however, that these methods 
can be adapted to the case $k=3$, and would narrowly miss a proof of the estimate
\begin{equation}\label{1.4}
J_{6,3}(X)\ll X^{6+\eps}.
\end{equation}
Suitable application of H\"older's inequality in fact leads from such an estimate to the proof of the main 
conjecture in full for $k=3$. In this paper, we are able to devise some modifications to the basic method 
that circumvent these implicit difficulties, leading to a proof of the estimate (\ref{1.4}), and hence the 
proof of Theorem \ref{theorem1.1}. We consequently economise in our exposition by reference to 
\cite{Woo2014b} in several places, though we aim to be transparent where confusion might otherwise 
occur.\par

Our account of the proof of Theorem \ref{theorem1.1} is split up into digestible stages spanning \S\S2--7. 
Aficionados of recent developments concerning Vinogradov's mean value theorem will recognise the basic 
structural features of this plan of attack, although novel elements must be incorporated as we proceed. We 
finish in \S8 by noting a couple of applications of our new estimate. Further applications are available 
associated with the related exponential sums
$$\sum_{1\le x\le X}e(\alp x^3+\bet x)\quad \text{and}\quad \sum_{1\le x\le X}e(\alp x^3+\bet x^2),
$$
where, as usual, we write $e(z)$ for $e^{2\pi iz}$. However, these applications require somewhat 
elaborate arguments that preclude their inclusion in this paper, and so we defer accounts of such 
developments to forthcoming papers \cite{Woo2014c,Woo2014d} elsewhere. The proof of the cubic case 
of the main conjecture seems worthy in its own right as the highlight of this memoir.\par

Finally, we note that a modification of the argument that we engineer here to establish Theorem 
\ref{theorem1.1} can in fact be adapted so as to establish a new bound for $J_{s,k}(X)$ when $k>3$. We 
take this opportunity to announce this new result.

\begin{theorem}\label{theorem1.2}
Suppose that $k\ge 3$ and $s\ge k(k-1)$. Then for each $\eps>0$, one has 
$J_{s,k}(X)\ll X^{2s-\frac{1}{2}k(k+1)+\eps}$.
\end{theorem}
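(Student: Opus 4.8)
The plan is to re-run the multigrade efficient congruencing argument of \S\S2--7 with the degree $k$ kept general in place of the specialisation $k=3$. The first step is a routine supercriticality reduction. Writing $f(\bfalp;X)=\sum_{1\le x\le X}e(\alp_1x+\alp_2x^2+\ldots+\alp_kx^k)$, so that $J_{s,k}(X)=\int_{[0,1)^k}|f(\bfalp;X)|^{2s}\,d\bfalp$ by orthogonality, the trivial bound $|f(\bfalp;X)|\le X$ yields
\[
J_{s,k}(X)\le X^{2(s-s_0)}J_{s_0,k}(X)\qquad (s\ge s_0).
\]
Taking $s_0=k(k-1)$, it therefore suffices to establish the main conjecture at the single anchor exponent $s_0$, that is, $J_{k(k-1),k}(X)\ll X^{2k(k-1)-\frac12k(k+1)+\eps}$; inserting this bound into the displayed inequality delivers the asserted estimate for every $s\ge k(k-1)$.

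To treat the anchor value, one assembles the congruencing apparatus as in the body of the paper: fix an auxiliary prime $p$ with $p\asymp X^\theta$ for a suitable $\theta>0$, and introduce the mean values counting solutions of (\ref{1.1}) in which prescribed blocks of variables are constrained to lie in fixed residue classes to increasing powers of $p$. The translation--dilation invariance of (\ref{1.1}) converts these constraints into genuine congruence information, and a H\"older-type argument produces the multigrade iterative inequality relating the conditioned mean value at one congruencing level to its counterpart at a higher level. For $k=3$ the relevant chain involves only a short vector of levels; for general $k$ it involves of order $k$ of them, so the bookkeeping is heavier, but the architecture --- conditioning so as to work with well-spaced tuples, then iterating --- is unaltered, and the modifications devised in \S\S2--7 to reduce the losses in the diagonal and conditioning steps are of a formal nature that persists for all $k$.

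One then iterates this inequality $O_k(1)$ times and optimises over the free parameters (the number of steps, the H\"older weights, and $\theta$), arriving at a permissible exponent for $J_{s_0,k}(X)$. The crux is that at $s_0=k(k-1)$ this exponent collapses to $2s_0-\tfrac12k(k+1)$, so that strongly diagonal behaviour is attained. A helpful feature here is that $k(k-1)\ge\tfrac12k(k+1)$ for $k\ge3$, with strict inequality once $k\ge4$: away from the cubic case the anchor exponent is genuinely supercritical, leaving slack in the optimisation, so that the delicate endgame which makes $k=3$ hard at $s=6$ simply does not intervene.

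The principal obstacle is exactly this final verification: confirming that the fixed-point analysis of the $k$-dependent system of iterative inequalities does deliver the clean exponent $2s_0-\tfrac12k(k+1)$ all the way down to $s_0=k(k-1)$, rather than only to a larger threshold such as the $s\ge k(k+1)$ of classical efficient congruencing. This amounts to the feasibility of a linear-programming-type computation, and it is precisely what the new bookkeeping is designed to secure. The accompanying labour --- and the reason the theorem is merely announced here --- is to carry the computation through uniformly in $k$, controlling the $X^\eps$ and $X^{o(1)}$ losses accrued over the $O_k(1)$ iterations, together with the cost of passing to well-conditioned $s$-tuples, for all $k\ge3$ simultaneously.
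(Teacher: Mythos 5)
The paper itself offers no proof of Theorem \ref{theorem1.2}: it is announced, with the details deferred to a forthcoming paper, so there is no argument here to compare yours against line by line. Your opening reduction is fine and is exactly what one would do: the trivial bound $|f(\bfalp;X)|\le X$ reduces everything to the single anchor exponent $s_0=k(k-1)$ (just as the paper reduces the $k=3$ case to $s=6$ in \S 2). The difficulty is that everything after that reduction in your proposal is a description of the method rather than a proof. The entire content of the theorem is the quantitative closure of the multigrade iteration at $s_0=k(k-1)$, and you explicitly leave that unverified (``the principal obstacle is exactly this final verification''). Concretely, what is missing is the $k$-dependent analogue of the new ingredients of this paper: the relaxed congruence-counting estimate of Lemma \ref{lemma3.2}, in which the admissible range $2b-a\le h\le 2b-a+\Del(b-a)$ costs a factor $p^{h-2b+a}$ that must be absorbed; the resulting two-term recurrence (\ref{6.9}) with characteristic roots $\tet_\pm$ determined by (\ref{2.3})--(\ref{2.4}); and the verification of the analogue of condition (\ref{2.7}), i.e.\ that the quantity $s_0^R$ of (\ref{6.1}) exceeds the relevant power so that the iteration amplifies rather than stalls. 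Whether these numbers work out at $s=k(k-1)$ for all $k\ge 3$ is precisely the theorem, and asserting that the ``linear-programming-type computation'' is feasible is not a substitute for doing it.

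A secondary but substantive error is your claim that for $k\ge 4$ the anchor is ``genuinely supercritical, leaving slack in the optimisation, so that the delicate endgame which makes $k=3$ hard at $s=6$ simply does not intervene.'' The obstruction in efficient congruencing at $s$ near $k(k-1)$ has nothing to do with whether $s_0$ exceeds the critical index $\tfrac12 k(k+1)$; if supercriticality alone supplied slack, the earlier result \cite[Corollary 1.2]{Woo2014a}, which already reaches $s\ge k^2-k+1$, would extend for free to $s\ge\tfrac12 k(k+1)$ and prove the full main conjecture. The theorem improves the threshold by exactly one variable, from $k^2-k+1$ to $k^2-k$, and that single unit is exactly where the new multigrade bookkeeping is needed; the ``slack'' you identify is irrelevant to that barrier. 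So the proposal is an announcement-level sketch mirroring the paper's own announcement, not a proof.
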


This estimate improves on \cite[Corollary 1.2]{Woo2014a}, where we show that the estimate presented in 
Theorem \ref{theorem1.2} holds for $s\ge k^2-k+1$. Details of the proof of this new estimate will appear 
in a forthcoming paper.

\section{The basic infrastructure} We prepare for the proof of Theorem \ref{theorem1.1} by introducing 
the notation and apparatus required in the iterative method that we ultimately engineer. This is based on 
our recent work \cite{Woo2014b}, though we deviate somewhat in order to circumvent a number of 
technical difficulties. We abbreviate $J_{s,3}(X)$ to $J_s(X)$, and also $J_{6,3}(X)$ to $J(X)$, without 
further comment, and we define $\lam\in \dbR$ by means of the relation
$$\lam=\underset{X\rightarrow \infty}{\lim \sup}\frac{\log J(X)}{\log X}.$$
It follows that for each $\eps>0$, and any $X\in \dbR$ sufficiently large in terms of $\eps$, one has 
$J(X)\ll X^{\lam+\eps}$.\par

Next we recall some standard notational conventions. The letter $\eps$ denotes a sufficiently small positive 
number. Our basic parameter is $X$, a large real number depending at most on $\eps$, unless otherwise 
indicated. Whenever $\eps$ appears in a statement, we assert that the statement holds for each $\eps>0$. 
As usual, we write $\lfloor \psi\rfloor$ to denote the largest integer no larger than $\psi$, and 
$\lceil \psi\rceil$ to denote the least integer no smaller than $\psi$. We make sweeping use of vector 
notation. Thus, with $t$ implied from the ambient environment, we write $\bfz\equiv \bfw\mmod{p}$ to 
denote that $z_i\equiv w_i\mmod{p}$ $(1\le i\le t)$, or $\bfz\equiv \xi\mmod{p}$ to denote that 
$z_i\equiv \xi\mmod{p}$ $(1\le i\le t)$. Finally, we employ the convention that whenever 
$G:[0,1)^3\rightarrow \dbC$ is integrable, then
$$\oint G(\bfalp)\d\bfalp =\int_{[0,1)^3}G(\bfalp)\d\bfalp .$$
Thus, on writing
\begin{equation}\label{2.1}
f(\bfalp;X)=\sum_{1\le x\le X}e(\alp_1x+\alp_2x^2+\alp_3x^3),
\end{equation}
it follows from orthogonality that
\begin{equation}\label{2.2}
J_s(X)=\oint |f(\bfalp;X)|^{2s}\d\bfalp .
\end{equation}

\par We next introduce the parameters appearing in our iterative method. We consider a positive number 
$\Del$ with $12\Del<1$ to be chosen in due course. Put
\begin{equation}\label{2.3}
\gra=\tfrac{2}{3}(7+2\Del)\quad \text{and}\quad \grb=\tfrac{8}{3}(1+\Del),
\end{equation}
and then define
\begin{equation}\label{2.4}
\tet_+=\tfrac{1}{2}(\gra+\sqrt{\gra^2-4\grb})\quad \text{and}\quad \tet_-=
\tfrac{1}{2}(\gra-\sqrt{\gra^2-4\grb}).
\end{equation}
Notice here that
$$\tet_\pm =\tfrac{1}{3}\left( 7+2\Del\pm \sqrt{25+4\Del+4\Del^2}\right) ,$$
so that our choice of $\Del$ ensures that
\begin{equation}\label{2.5}
\tet_+>4+\tfrac{2}{3}\Del\quad \text{and}\quad \tet_-<\tfrac{2}{3}+\tfrac{2}{3}\Del<1.
\end{equation}

\par Our goal is to establish that $\lam\le 6+\Del$. Since we are at liberty to take $\Del$ to be an 
arbitrarily small positive number, it then follows that one has
\begin{equation}\label{2.6}
J_6(X)\ll X^{6+\eps}.
\end{equation}
By applying H\"older's inequality to the right hand side of (\ref{2.2}), we deduce from this estimate that 
whenever $1\le t\le 6$, one has
$$J_t(X)\le \Bigl( \oint |f(\bfalp;X)|^{12}\d\bfalp \Bigr)^{t/6}\ll X^{t+\eps}.$$
Moreover, by applying the trivial estimate $|f(\bfalp;X)|\le P$ in combination with (\ref{2.2}) and 
(\ref{2.6}), we find that when $t>6$, one has
$$J_t(X)\le X^{2t-12}\oint |f(\bfalp;X)|^{12}\d\bfalp \ll X^{2t-6+\eps}.$$
Thus the main conjecture in the cubic case of Vinogradov's mean value theorem does indeed follow from 
(\ref{2.6}).\par

Let $R$ be a natural number sufficiently large in terms of $\Del$. Specifically, we choose $R$ as follows. 
Since $\tet_+>4$, we may put $\nu=\tet_+-4>0$. Then we have
$$4^n=\tet_+^n(1-\nu/\tet_+)^n\le \tet_+^ne^{-\nu n/\tet_+}.$$
Consequently, if we take $R=\lceil W\tet_+/\nu\rceil $, with $W$ a large enough integer, then we ensure 
that
\begin{equation}\label{2.7}
4^R\le e^{-W}\tet_+^R<\frac{\tet_+^{R+1}-\tet_-^{R+1}}{\tet_+-\tet_-}-\tfrac{1}{2}\tet_+\tet_-
\left( \frac{\tet_+^R-\tet_-^R}{\tet_+-\tet_-}\right) .
\end{equation}
The significance of this condition will become apparent in due course (see the discussion surrounding 
(\ref{6.1}) below). Having fixed $R$ satisfying this condition, we take $N$ to be a natural number 
sufficiently large in terms of $R$, and put
\begin{equation}\label{2.8}
B=3^NN,\quad \tet=(200N^2)^{-3RN},\quad \del=(10N)^{-12RN}\tet .
\end{equation}
In view of the definition of $\lam$, there exists a sequence of natural numbers $(X_l)_{l=1}^\infty$, 
tending to infinity with $l$, and with the property that $J(X_l)>X_l^{\lam-\del}$ $(l\in \dbN)$. Also, 
provided that $X_l$ is sufficiently large, one has the corresponding upper bound $J(Y)<Y^{\lam+\del}$ 
for $Y\ge X_l^{1/2}$. We consider a fixed element $X=X_l$ of the sequence $(X_l)_{l=1}^\infty$, which 
we may assume to be sufficiently large in terms of $N$. We put $M=X^\tet$, and note from (\ref{2.8}) 
that $X^\del<M^{1/N}$. Throughout, implicit constants may depend on $N$ and $\eps$, but not on any 
other variable.\par

We next introduce the cast of exponential sums and mean values appearing in our arguments. Let $p$ be a 
prime number with $M<p\le 2M$ to be fixed in due course. When $c$ and $\xi$ are non-negative integers, 
and $\bfalp\in [0,1)^3$, we define
\begin{equation}\label{2.9}
\grf_c(\bfalp;\xi)=\sum_{\substack{1\le x\le X\\ x\equiv
\xi\mmod{p^c}}}e(\alp_1x+\alp_2x^2+\alp_3x^3).
\end{equation}
When $m\in \{1,2\}$, denote by $\Xi_c^m(\xi)$ the set of integral $m$-tuples $(\xi_1,\ldots ,\xi_m)$, 
with $1\le \bfxi\le p^{c+1}$ and $\bfxi\equiv \xi\mmod{p^c}$, and in the case $m=2$ satisfying the 
property that $\xi_1\not\equiv \xi_2\mmod{p^{c+1}}$. We then put
$$\grF_c^m(\bfalp;\xi)=\sum_{\bfxi\in \Xi_c^m(\xi)}\prod_{i=1}^m\grf_{c+1}(\bfalp;\xi_i).$$
Next, when $a$ and $b$ are positive integers, we define
\begin{align*}
I_{a,b}^m(X)&=\max_{1\le \xi\le p^a}\max_{\substack{1\le \eta\le p^b\\ \eta\not\equiv \xi\mmod{p}}}
\oint |\grF_a^m(\bfalp;\xi)^2\grf_b(\bfalp;\eta)^{12-2m}|\d\bfalp ,\\
K_{a,b}^m(X)&=\max_{1\le \xi\le p^a}
\max_{\substack{1\le \eta\le p^b\\ \eta\not\equiv \xi\mmod{p}}}
\oint |\grF_a^m(\bfalp;\xi)^2\grF_b^2(\bfalp;\eta)^2\grf_b(\bfalp;\eta)^{8-2m}|\d\bfalp .
\end{align*}
The implicit dependence on $p$ in the above notation will be rendered irrelevant in \S4, since we fix the 
choice of this prime following Lemma \ref{lemma4.2}.\par

We next align the definition of $K_{a,b}^m(X)$ when $a=0$ with the conditioning idea. When $\xi$ is an 
integer and $\bfzet$ is a tuple of integers, we denote by $\Xi^m(\bfzet)$ the set of $m$-tuples 
$(\xi_1,\ldots ,\xi_m)\in \Xi_0^m(0)$ such that $\xi_i\not\equiv \zet_j\mmod{p}$ for all $i$ and $j$. 
Recalling (\ref{2.9}), we put
$$\grF^m(\bfalp;\bfzet)=\sum_{\bfxi\in \Xi^m(\bfzet)}\prod_{i=1}^m\grf_1(\bfalp;\xi_i),$$
and then define
$$K_{0,c}^m(X)=\max_{1\le \eta\le p^c}\oint |\grF^m(\bfalp;\eta)^2
\grF_c^2(\bfalp;\eta)^2\grf_c(\bfalp;\eta)^{8-2m}|\d\bfalp .$$

\par As in our earlier work, we make use of an operator that indicates the size of a mean value in relation 
to its anticipated magnitude. In the present circumstances, we adopt the convention that
\begin{align}
\llbracket J(X)\rrbracket &=J(X)/X^{6+\Del},\label{2.10}\\
\llbracket I_{a,b}^m(X)\rrbracket &=\frac{I_{a,b}^m(X)}{(X/M^a)^{m+\Del}(X/M^b)^{6-m}},
\label{2.11}\\
\llbracket K_{a,b}^m(X)\rrbracket &=\frac{K_{a,b}^m(X)}{(X/M^a)^{m+\Del}(X/M^b)^{6-m}}.
\label{2.12}
\end{align}
Using this notation, our earlier bounds for $J(X)$ may be written in the form
\begin{equation}\label{2.13}
\llbracket J(X)\rrbracket >X^{\Lam-\del}\quad \text{and}\quad \llbracket 
J(Y)\rrbracket <Y^{\Lam+\del}\quad (Y\ge X^{1/2}),
\end{equation}
where $\Lam$ is defined by $\Lam=\lam-(6+\Del)$.\par

Finally, we recall a simple estimate associated with the system (\ref{1.1}).

\begin{lemma}\label{lemma2.1}
Suppose that $c$ and $d$ are non-negative integers with $c\le \tet^{-1}$ and $d\le \tet^{-1}$. Then 
whenever $u,v\in \dbN$ satisfy $u+v=6$, and $\xi,\zeta\in \dbZ$, one has
$$\oint |\grf_c(\bfalp;\xi)^{2u}\grf_d(\bfalp;\zet)^{2v}|\d\bfalp \ll (J(X/M^c))^{u/6}(J(X/M^d))^{v/6}.
$$
\end{lemma}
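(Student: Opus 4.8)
The inequality to be proved is a mean value estimate for a product of two exponential sums restricted to arithmetic progressions. The natural route is to combine orthogonality with H\"older's inequality, exploiting the fact that $\grf_c(\bfalp;\xi)$ and $\grf_d(\bfalp;\zeta)$ are themselves sums of the shape $f(\bfbet;Y)$ after an affine change of variables. First I would observe that $2u+2v=12$, so the integral counts solutions of the Vinogradov system with $u$ variables each $x_i$ lying in a progression modulo $p^c$, another $u$ variables $y_i$ in the same progression, and similarly $v$ variables modulo $p^d$ for the $\zeta$-part. The plan is to decouple the two progressions by H\"older, writing
\begin{equation*}
\oint |\grf_c(\bfalp;\xi)^{2u}\grf_d(\bfalp;\zeta)^{2v}|\d\bfalp \le \Bigl(\oint |\grf_c(\bfalp;\xi)|^{12}\d\bfalp\Bigr)^{u/6}\Bigl(\oint |\grf_d(\bfalp;\zeta)|^{12}\d\bfalp\Bigr)^{v/6},
\end{equation*}
which is valid precisely because $u/6+v/6=1$ and the exponents $12=2\cdot 6$ match up.

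**Reducing to a single progression.** The next step is to bound $\oint |\grf_c(\bfalp;\xi)|^{12}\d\bfalp$ by $J(X/M^c)$, up to the implied constant. Here I would substitute $x=\xi+p^c y$ in \eqref{2.9}, so that $y$ ranges over an interval of length at most $X/p^c+1\le X/M^c$ (using $M<p$), and expand $\alp_1x+\alp_2x^2+\alp_3x^3$ as a cubic polynomial in $y$ with new coefficients $\bfbet=\bfbet(\bfalp,\xi,p^c)$ that are linear combinations of $\alp_1,\alp_2,\alp_3$. The key point is that the linear change of variables $\bfalp\mapsto\bfbet$ has a triangular structure with the leading coefficient $\beta_3=p^{3c}\alp_3$, so it is a measure-preserving bijection of $[0,1)^3$ onto itself up to the usual periodicity considerations; hence $\oint|\grf_c(\bfalp;\xi)|^{12}\d\bfalp = \oint |f(\bfbet;X/M^c)|^{12}\d\bfbet = J_6(X/M^c) = J(X/M^c)$, where I should be mildly careful that the length of the $y$-interval is $\le X/M^c$ rather than exactly equal, which only costs a constant since monotonicity of $J$ in the length, or a trivial majorization of the sum, absorbs it. The condition $c\le\tet^{-1}$ guarantees $M^c\le X$, so that $X/M^c\ge 1$ and the quantity $J(X/M^c)$ makes sense; the analogous remark applies to $d$.

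**The main obstacle.** The genuinely delicate point is justifying that the change of variables $\bfalp\mapsto\bfbet$ preserves the integral over the unit cube. This is a standard manoeuvre in this circle of ideas, but it requires that the affine substitution in $x$ induce, on the dual side, a unipotent (triangular) linear map whose diagonal entries are nonzero integers, so that integrating a periodic function against it over a fundamental domain returns the same value; one typically phrases this via the observation that $\oint |g(\bfalp)|^{2t}\d\bfalp$ counts integer solutions and that the solution count is invariant under $x_i\mapsto \xi+p^c x_i$. I would therefore prefer to argue directly at the level of solution counting: $\oint|\grf_c(\bfalp;\xi)|^{12}\d\bfalp$ is the number of solutions of \eqref{1.1} with $k=3$, $s=6$, in which all twelve variables lie in the progression $\xi\ \mathrm{mod}\ p^c$ within $[1,X]$; subtracting $\xi$ from each variable and dividing by $p^c$ maps this bijectively onto a subset of the solutions counted by $J_6(X/M^c)=J(X/M^c)$, because the system \eqref{1.1} is translation-invariant (the $j=1,2,3$ equations are preserved under $x\mapsto x+\xi$ after re-expanding, by the same Newton/binomial identities) and dilation-invariant. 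This gives the bound with constant $1$, and feeding it into the H\"older inequality above completes the proof. The only bookkeeping to watch is that the interval for the dilated variables has length $\lfloor (X-\xi)/p^c\rfloor+1\le X/M^c$ when $1\le\xi\le p^c$, which is exactly what is needed.
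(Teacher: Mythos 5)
Your argument is correct. The paper itself offers no argument at all here: it simply quotes \cite[Corollary 2.2]{FW2013}, and your two steps --- H\"older's inequality with the conjugate exponents $6/u$ and $6/v$, followed by the translation--dilation invariance of the system (\ref{1.1}) to convert the twelfth moment of $\grf_c(\bfalp;\xi)$ into a count of at most $J_6(X/p^c+O(1))\ll J(X/M^c)$ --- are exactly the standard proof of that cited corollary. Your decision to justify the reduction by counting solutions directly, rather than by a change of variables in $\bfalp$ on the torus, is the cleaner route and avoids the only delicate point.
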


\begin{proof} This is immediate from \cite[Corollary 2.2]{FW2013}.
\end{proof}

\section{Auxiliary systems of congruences} We must modify slightly our previous work concerning 
auxiliary congruences so as to accommodate behaviour that deviates slightly from the diagonal. When $a$ 
and $b$ are integers with $1\le a<b$, we denote by $\calB_{a,b}^n(\bfm;\xi,\eta)$ the set of solutions of 
the system of congruences
\begin{equation}\label{3.1}
\sum_{i=1}^n(z_i-\eta)^j\equiv m_j\mmod{p^{jb}}\quad (1\le j\le 3),
\end{equation}
with $1\le \bfz\le p^{3b}$ and $\bfz\equiv \xi\mmod{p^{a+1}}$ for some $\bfxi\in \Xi_a^n(\xi)$. We 
define an equivalence relation $\calR(\lam)$ on integral $n$-tuples by declaring $\bfx$ and $\bfy$ to be 
$\calR(\lam)$-equivalent when $\bfx\equiv \bfy\mmod{p^\lam}$. We then write 
$\calC_{a,b}^{n,h}(\bfm;\xi,\eta)$ for the set of $\calR(hb)$-equivalence classes of 
$\calB_{a,b}^n(\bfm;\xi,\eta)$, and define $B_{a,b}^{n,h}(p)$ by putting 
\begin{equation}\label{3.2}
B_{a,b}^{n,h}(p)=\max_{1\le \xi\le p^a}
\max_{\substack{1\le \eta\le p^b\\ \eta\not\equiv \xi\mmod{p}}}
\max_{1\le \bfm\le p^{3b}}\text{card}(\calC_{a,b}^{n,h}(\bfm;\xi,\eta)).
\end{equation}

\par When $a=0$ we modify these definitions, so that $\calB_{0,b}^n(\bfm;\xi,\eta)$ denotes the set of 
solutions of the system of congruences (\ref{3.1}) with $1\le \bfz\le p^{3b}$ and 
$\bfz\equiv \bfxi\mmod{p}$ for some $\bfxi\in \Xi_0^n(\xi)$, and for which in addition 
$\bfz\not\equiv \eta\mmod{p}$. As in the situation in which one has $a\ge 1$, we write 
$\calC_{0,b}^{n,h}(\bfm;\xi,\eta)$ for the set of $\calR(hb)$-equivalence classes of 
$\calB_{0,b}^n(\bfm;\xi,\eta)$, but we define $B_{0,b}^{n,h}(p)$ by putting
\begin{equation}\label{3.3}
B_{0,b}^{n,h}(p)=\max_{1\le \eta \le p^b}\max_{1\le \bfm\le p^{3b}}
\text{card}(\calC_{0,b}^{n,h}(\bfm;0,\eta)).
\end{equation}

\par We recall a version of Hensel's lemma made available in \cite{Woo1996}.

\begin{lemma}\label{lemma3.1}
Let $f_1,\ldots ,f_d$ be polynomials in $\dbZ[x_1,\ldots ,x_d]$ with respective degrees $k_1,\ldots ,k_d$, 
and write
$$J(\bff;\bfx)=\det\left( \frac{\partial f_j}{\partial x_i}(\bfx)\right)_{1\le i,j\le d}.$$
When $\varpi$ is a prime number, and $l$ is a natural number, let $\calN(\bff;\varpi^l)$ denote the 
number of solutions of the simultaneous congruences
$$f_j(x_1,\ldots ,x_d)\equiv 0\mmod{\varpi^l}\quad (1\le j\le d),$$
with $1\le x_i\le \varpi^l$ $(1\le i\le d)$ and $(J(\bff;\bfx),\varpi)=1$. Then 
$\calN(\bff;\varpi^l)\le k_1\ldots k_d$.
\end{lemma}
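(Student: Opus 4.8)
The plan is to prove the lemma by the classical multivariate Hensel-lifting argument: first reduce the count for a general exponent $l$ to the case $l=1$, and then bound the number of nonsingular common zeros modulo $\varpi$ by $k_1\cdots k_d$ via a B\'ezout-type estimate. Throughout, write $\bar f_j$ for the reduction of $f_j$ modulo $\varpi$.

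First I would show that for every $l\ge 1$ the reduction map taking a solution of $f_j(\mathbf{x})\equiv 0\mmod{\varpi^{l+1}}$ $(1\le j\le d)$ with $(J(\mathbf{f};\mathbf{x}),\varpi)=1$ to its residue modulo $\varpi^l$ is a bijection onto the corresponding set of solutions modulo $\varpi^l$. Given a nonsingular solution $\mathbf{a}$ modulo $\varpi^l$, one seeks a lift of the shape $\mathbf{x}=\mathbf{a}+\varpi^l\mathbf{t}$. Since each $f_j$ is a polynomial, the exact Taylor expansion gives
$$f_j(\mathbf{a}+\varpi^l\mathbf{t})=f_j(\mathbf{a})+\varpi^l\sum_{i=1}^d t_i\frac{\partial f_j}{\partial x_i}(\mathbf{a})+\varpi^{2l}g_j(\mathbf{t})$$
for suitable $g_j\in\dbZ[t_1,\ldots,t_d]$, and since $2l\ge l+1$ and $\varpi^l\mid f_j(\mathbf{a})$, the requirement $f_j(\mathbf{x})\equiv 0\mmod{\varpi^{l+1}}$ $(1\le j\le d)$ reduces to a system of $d$ linear congruences for $\mathbf{t}$ modulo $\varpi$ whose coefficient matrix is exactly $(\partial f_j/\partial x_i(\mathbf{a}))_{1\le i,j\le d}$. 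By hypothesis this matrix is invertible over $\dbF_\varpi$, so $\mathbf{t}$ is uniquely determined modulo $\varpi$, producing a unique lift $\mathbf{x}$ modulo $\varpi^{l+1}$; moreover $J(\mathbf{f};\mathbf{x})\equiv J(\mathbf{f};\mathbf{a})\mmod{\varpi}$, so the lift is again nonsingular. Injectivity is immediate from the uniqueness of the lift, and surjectivity from its existence. Iterating down to $l=1$ yields $\mathcal{N}(\mathbf{f};\varpi^l)=\mathcal{N}(\mathbf{f};\varpi)$ for all $l\ge 1$.

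It then suffices to bound $\mathcal{N}(\mathbf{f};\varpi)$, the number of $\mathbf{a}\in\dbF_\varpi^d$ with $\bar f_1(\mathbf{a})=\cdots=\bar f_d(\mathbf{a})=0$ and $\det(\partial\bar f_j/\partial x_i(\mathbf{a}))\ne 0$. If some $\bar f_j$ vanishes identically then the $j$-th row of the Jacobian is identically zero and $\mathcal{N}(\mathbf{f};\varpi)=0$; otherwise $\deg\bar f_j\le k_j$ for every $j$. Passing to $\overline{\dbF}_\varpi$, the Jacobian criterion shows that each such nonsingular point is an isolated point of the affine variety $\{\bar f_1=\cdots=\bar f_d=0\}$. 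A version of B\'ezout's theorem valid without the hypothesis that the hypersurfaces meet in the expected dimension bounds the number of isolated points of an intersection of $d$ hypersurfaces in $d$-dimensional affine space by the product of their degrees; hence the number of nonsingular $\dbF_\varpi$-points is at most $\prod_{j=1}^d\deg\bar f_j\le k_1\cdots k_d$, and combining this with the previous paragraph proves $\mathcal{N}(\mathbf{f};\varpi^l)\le k_1\cdots k_d$. The one genuinely non-routine ingredient is this last B\'ezout-type bound, needed precisely because the $d$ hypersurfaces need not intersect properly; it is nonetheless standard (for instance via Fulton's refined B\'ezout inequality), and for a self-contained treatment one can instead induct on $d$, eliminating one variable at a time by resultants and separating the finitely many isolated zeros by a generic linear form while tracking degrees. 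Everything else is bookkeeping.
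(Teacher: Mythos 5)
Your argument is correct. Bear in mind that the paper itself offers no proof of Lemma \ref{lemma3.1} beyond the citation of \cite[Theorem 1]{Woo1996}, so the real comparison is with that reference, whose proof lives in the same circle of ideas. Your two steps are both sound. The multivariate Hensel step is carried out properly: the Taylor coefficients of a polynomial over $\dbZ$ at an integer point are integers, so the remainder genuinely has the form $\varpi^{2l}g_j(\bft)$ with $g_j\in\dbZ[\bft]$; the resulting linear system modulo $\varpi$ has the invertible coefficient matrix $\bigl(\partial f_j/\partial x_i(\bfa)\bigr)$; and nonsingularity persists under the lift because $J(\bff;\bfx)$ depends only on $\bfx$ modulo $\varpi$. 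This correctly yields $\calN(\bff;\varpi^l)=\calN(\bff;\varpi)$. The second step is where the one genuinely nontrivial external input enters, and you identify it accurately: since the hypersurfaces $\bar f_j=0$ need not meet properly in $\overline{\dbF}_\varpi^{\,d}$, naive B\'ezout is unavailable, and one must invoke the refined inequality bounding the number of zero-dimensional components of an intersection of $d$ hypersurfaces in $d$-space by the product of the degrees (applied to the projective closures), together with the Jacobian criterion to see that nonsingular points are isolated; your treatment of the degenerate cases where some $\bar f_j$ vanishes identically or drops degree modulo $\varpi$ is also correct. The proof in \cite{Woo1996} runs the lifting in the opposite direction -- Hensel's lemma attaches to each nonsingular solution a distinct nonsingular $\varpi$-adic zero of $\bff$, and the B\'ezout-type count is then performed in characteristic zero, which sidesteps the degree-drop issue on the special fibre -- but both routes ultimately rest on the same improper-intersection form of B\'ezout, so your version is a legitimate, essentially equivalent substitute.
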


\begin{proof} This is \cite[Theorem 1]{Woo1996}.
\end{proof}

We now present the key result on congruences utilised in this paper.

\begin{lemma}\label{lemma3.2}
Suppose that $a$ and $b$ are integers with $0\le a<b$, and that $h$ is a natural number with 
$2b-a\le h\le 2b-a+\Del(b-a)$. Then one has
$$B_{a,b}^{1,3}(p)\le 6\quad \text{and}\quad B_{a,b}^{2,h/b}(p)\le 6p^{h-2b+a}.$$
\end{lemma}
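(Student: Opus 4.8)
The plan is to count solutions of the congruence system \eqref{3.1} by exploiting the non-vanishing of a suitable Jacobian modulo $p$, which places us in a position to invoke the Hensel-type Lemma \ref{lemma3.1}. I would treat the two assertions separately, starting with the bound $B_{a,b}^{1,3}(p)\le 6$. Here $n=1$, so the system \eqref{3.1} reduces to the three congruences $(z-\eta)^j\equiv m_j\mmod{p^{jb}}$ for $j=1,2,3$, subject to $z\equiv\xi\mmod{p^{a+1}}$ with $\xi\in\Xi_a^1(\xi)$. The crucial observation is that any two $\calR(3b)$-inequivalent solutions $z,z'$ must already be distinguishable at level $p^b$ via the $j=1$ congruence, because $z\equiv z'\mmod{p^b}$ together with the first congruence forces $z\equiv z'\mmod{p^b}$, and then the higher congruences propagate this. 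More precisely, I would argue that the first congruence $(z-\eta)\equiv m_1\mmod{p^b}$ pins down $z$ modulo $p^b$ uniquely, so there are at most — well, this needs care. Let me instead apply Lemma \ref{lemma3.1} directly: set $d=1$, $f_1(z)=(z-\eta)^3-m_3$ (degree $3$), work modulo $p^{3b}$, and check that the Jacobian $f_1'(z)=3(z-\eta)^2$ is coprime to $p$ provided $p\nmid(z-\eta)$; the condition $\eta\not\equiv\xi\mmod p$ secures exactly this. That gives at most $3$ solutions modulo $p^{3b}$ of the single cubic congruence, hence at most $3$ (and certainly at most $6$) $\calR(3b)$-classes, absorbing the constant from a possible $p=3$ nuisance by the crude bound $6$.

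For the second and harder assertion $B_{a,b}^{2,h/b}(p)\le 6p^{h-2b+a}$, with $n=2$ and $2b-a\le h\le 2b-a+\Del(b-a)$, I would set up the Jacobian of the map $\bfz\mapsto\bigl(\sum_i(z_i-\eta)^j\bigr)_{j=1,2,3}$ — but this is a map in two variables with three target coordinates, so one must select an appropriate $2\times 2$ minor. The natural choice is to use the $j=1$ and $j=2$ coordinates, giving Jacobian (up to a constant) equal to $\det\begin{pmatrix}1 & 1\\ z_1-\eta & z_2-\eta\end{pmatrix}=(z_2-z_1)$, or with the $j=2,3$ coordinates a Vandermonde-type determinant proportional to $(z_1-\eta)(z_2-\eta)(z_2-z_1)$. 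The condition $\bfz\equiv\xi\mmod{p^{a+1}}$ for some $\bfxi\in\Xi_a^2(\xi)$ forces $z_1\not\equiv z_2\mmod{p^{a+1}}$, so $p^{-\rho}\|(z_2-z_1)$ for some $\rho\le a$. The strategy is then to fix the power of $p$ dividing $z_2-z_1$, substitute $z_2=z_1+p^\rho w$ with $p\nmid w$, and re-scale: the residual system in $(z_1,w)$ now has a Jacobian coprime to $p$ at the appropriate reduced modulus, and Lemma \ref{lemma3.1} yields $O(1)$ solutions per choice of the $p^{a}$-many values of $z_1\bmod p^{a+1}$ and the structural data. Tracking the powers of $p$ carefully — the modulus losses from the substitution and the range $h\le 2b-a+\Del(b-a)$ — should produce the factor $p^{h-2b+a}$, with the constant $6$ again soaking up the bounded-degree contributions (the product of the degrees $1\cdot 2$ from the two chosen polynomials, plus a factor for small-prime anomalies).

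The main obstacle will be the bookkeeping of $p$-adic valuations in the $n=2$ case: one must stratify over $\rho=v_p(z_2-z_1)\in\{0,1,\dots,a\}$, perform the rescaling $z_2=z_1+p^\rho w$ cleanly, verify that after this change of variables the reduced system genuinely has a unit Jacobian at the reduced modulus (so that Lemma \ref{lemma3.1} applies with $\varpi=p$, $l=$ the reduced exponent), and then sum the resulting geometric-series-type contributions over $\rho$ and over the $p^{a}$ residues for $z_1$, checking that the total is $\le 6p^{h-2b+a}$ rather than something larger. The interplay between the three congruences at moduli $p^b,p^{2b},p^{3b}$ and the single equivalence relation $\calR(hb)$ — in particular whether one should reduce all three congruences to level $hb$ or exploit all of $p^{3b}$ — requires attention, and the upper range of $h$ (which makes $hb$ slightly exceed $2b-a$) is exactly what allows the count to stay within $p^{h-2b+a}$; with $h=2b-a$ one would be proving the sharper $B_{a,b}^{2,h/b}(p)=O(1)$, so the $\Del$-slack is a feature, not a bug, and I would make sure the argument uses it only where genuinely needed. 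I would also double-check the $a=0$ modified definitions \eqref{3.3}: there the extra condition $\bfz\not\equiv\eta\mmod p$ must be compatible with the Jacobian being a unit, which it is, since $z_i-\eta$ is then a unit for both $i$.
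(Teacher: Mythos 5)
Your treatment of the first bound is essentially sound: applying Lemma \ref{lemma3.1} with $d=1$ to the single congruence $(z-\eta)^3\equiv m_3\mmod{p^{3b}}$, whose Jacobian $3(z-\eta)^2$ is a unit because $p>3$ and $z\equiv \xi\not\equiv\eta\mmod{p}$, gives at most $3\le 6$ classes (the paper simply quotes this case from earlier work). The second bound is where the content of the lemma lies, and your plan does not close it. The first problem is that you mislocate the source of the factor $p^{h-2b+a}$. Writing $\ome=h-(2b-a)$, this factor arises because one discards the $j=1$ congruence and works with the $j=2,3$ congruences at the \emph{common} modulus $p^{2b+\ome}$: the $j=3$ datum determines $n_3$ modulo $p^{2b+\ome}$ uniquely (since $3b\ge 2b+\ome$), whereas the $j=2$ datum is only prescribed modulo $p^{2b}$ and must be lifted to modulus $p^{2b+\ome}$, costing exactly $p^{\ome}$ choices of $n_2$; for each fixed lift the number of $\calR(h)$-classes is then $O(1)$ by the Hensel argument. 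Your proposed sources of the power — a sum over $\rho=v_p(z_2-z_1)$ and over residues of $z_1$ modulo $p^{a+1}$ — are not where it comes from, and the stratification over $\rho$ is in fact vacuous: the condition $\bfxi\in\Xi_a^2(\xi)$ forces $\xi_1\equiv\xi_2\equiv\xi\mmod{p^a}$ as well as $\xi_1\not\equiv\xi_2\mmod{p^{a+1}}$, so $v_p(z_2-z_1)=a$ exactly.

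The second problem is the change of variables. The substitution $z_2=z_1+p^{\rho}w$ does not by itself render any $2\times 2$ minor a unit, since the congruences still involve $z_1-\eta$ at mixed moduli. The move that works is $z_i=p^ay_i+\xi$ for \emph{both} $i$ (legitimate precisely because both $z_i\equiv\xi\mmod{p^a}$), which converts the inequivalence condition into $y_1\not\equiv y_2\mmod{p}$; one then compares a general solution with a fixed one, uses the binomial theorem to eliminate cross terms, and arrives at a clean two-equation system in $(y_1,y_2)$ modulo $p^{h-a}$ whose Jacobian is $\equiv 2(y_2-y_1)\not\equiv 0\mmod{p}$, so that Lemma \ref{lemma3.1} yields at most $2\cdot 3=6$ classes. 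You correctly flag the alignment of the moduli $p^b,p^{2b},p^{3b}$ with the single level $p^h$ as "requiring attention", but that alignment is exactly the step that manufactures $p^{h-2b+a}$, and it is left unresolved; as written the proposal is a plausible plan rather than a proof.
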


\begin{proof} The estimate $B_{a,b}^{1,3}(p)\le 6$ is immediate from the case $h=3b$, $k=3$ of 
\cite[Lemma 3.1]{Woo2014b}. We therefore focus on establishing the second estimate asserted in the 
statement of the lemma. We begin by considering the situation with $a\ge 1$, the remaining cases with 
$a=0$ being easily accommodated within our argument for the former case. Consider fixed natural 
numbers $a$, $b$ and $h$ with $1\le a\le b$ and
$$2b-a\le h\le 2b-a+\Del (b-a),$$
and fixed integers $\xi$ and $\eta$ with $1\le \xi\le p^a$, $1\le \eta\le p^b$ and 
$\eta \not\equiv \xi\mmod{p}$.
Write $\ome=h-(2b-a)$, so that 
$0\le \ome \le \Del(b-a)$. We denote by $\calD_1(\bfn)$ the set of $\calR(h)$-equivalence classes of 
solutions of the system of congruences
\begin{equation}\label{3.4}
(z_1-\eta)^j+(z_2-\eta)^j\equiv n_j\mmod{p^{2b+\ome}}\quad (j=2,3),
\end{equation}
with $1\le \bfz\le p^{3b}$ and $\bfz\equiv \bfxi\mmod{p^{a+1}}$ for some $\bfxi \in \Xi_a^2(\xi)$. Fix 
an integral triple $\bfm$. To any solution $\bfz$ of (\ref{3.4}) there corresponds a unique pair 
$\bfn=(n_2,n_3)$ with $1\le \bfn\le p^{2b+\ome}$ for which (\ref{3.4}) holds and
$$n_j\equiv m_j\mmod{p^{\sig(j)}}\quad (j=2,3),$$
where $\sig(j)=\min \{ jb,2b+\ome\}$. We therefore infer that
$$\calC_{a,b}^{2,h/b}(\bfm;\xi,\eta)\subseteq \bigcup_{\substack{1\le n_2\le p^{2b+\ome}\\ 
n_2\equiv m_2\mmod{p^{2b}}}}\bigcup_{\substack{1\le n_3\le p^{2b+\ome}\\ 
n_3\equiv m_3\mmod{p^{2b+\ome}}}}\calD_1(\bfn).$$
The number of pairs $\bfn$ in the union is equal to $p^\ome$. Consequently, one has
\begin{equation}\label{3.5}
\text{card}(\calC_{a,b}^{2,h/b}(\bfm;\xi,\eta))\le p^\ome \max_{1\le \bfn\le p^{2b+\ome}}
\text{card}(\calD_1(\bfn)).
\end{equation}

\par Observe that for any solution $\bfz'$ of (\ref{3.4}) there is an $\calR(h)$-equivalent solution $\bfz$ 
satisfying $1\le \bfz\le p^{2b+\ome}$. We next rewrite each variable $z_i$ in the shape 
$z_i=p^ay_i+\xi$. One finds from the hypothesis $\bfz\equiv \bfxi\mmod{p^{a+1}}$ for some 
$\bfxi\in \Xi_a^2(\xi)$ that $y_1\not\equiv y_2\mmod{p}$. Write $\zet=\xi-\eta$, note that 
$p\nmid \zet$, and write the multiplicative inverse of $\zet$ modulo $p^{2b+\ome}$ as $\zet^{-1}$. 
Then we deduce from (\ref{3.4}) that $\text{card}(\calD_1(\bfn))$ is bounded above by the number of 
$\calR(h-a)$-equivalence classes of solutions of the system of congruences
\begin{equation}\label{3.6}
(p^ay_1\zet^{-1}+1)^j+(p^ay_2\zet^{-1}+1)^j\equiv n_j(\zet^{-1})^j\mmod{p^{2b+\ome}}\quad 
(j=2,3),
\end{equation}
with $1\le \bfy\le p^{h-a}$. Recall that $h=2b-a+\ome$, and let $\bfy=\bfw$ be any solution of the 
system (\ref{3.6}), if any one such exists. Then we find that all other solutions $\bfy$ satisfy the system
\begin{equation}\label{3.7}
\sum_{i=1}^2\left( (p^ay_i\zet^{-1}+1)^j-(p^aw_i\zet^{-1}+1)^j\right) 
\equiv 0\mmod{p^{2b+\ome}}\quad (j=2,3).
\end{equation}
When $1\le j\le 3$, write
$$s_j(\bfy,\bfw)=y_1^j+y_2^j-w_1^j-w_2^j.$$
Then by applying the Binomial theorem, it follows that the system (\ref{3.7}) is equivalent to the new system
$$\left. \begin{aligned}
2(\zet^{-1}p^a)s_1(\bfy,\bfw)+&(\zet^{-1}p^a)^2s_2(\bfy,\bfw)&\equiv 0\mmod{p^{2b+\ome}}\\
3(\zet^{-1}p^a)s_1(\bfy,\bfw)+&3(\zet^{-1}p^a)^2s_2(\bfy,\bfw)+
(\zet^{-1}p^a)^3s_3(\bfy,\bfw)&\equiv 0\mmod{p^{2b+\ome}}
\end{aligned}\right\}.$$
By employing the quadratic congruence to eliminate the linear term in the cubic congruence here, one finds 
that this system is in turn equivalent to
$$\left. \begin{aligned}
s_1(\bfy,\bfw)+(2\zet)^{-1}p^as_2(\bfy,\bfw)&\equiv 0\mmod{p^h}\\
s_2(\bfy,\bfw)+2(3\zet)^{-1}p^as_3(\bfy,\bfw)&\equiv 0\mmod{p^{h-a}}
\end{aligned} \right\}.$$

\par Denote by $\calD_2(\bfu)$ the set of $\calR(h-a)$-equivalence classes of solutions of the system of 
congruences
$$\left. \begin{aligned}
y_1+y_2+(2\zet)^{-1}p^a(y_1^2+y_2^2)&\equiv u_2\mmod{p^{h-a}}\\
y_1^2+y_2^2+2(3\zet)^{-1}p^a(y_1^3+y_2^3)&\equiv u_3\mmod{p^{h-a}}
\end{aligned}\right\},$$
with $1\le y_1,y_2\le p^{h-a}$ satisfying $y_1\not \equiv y_2\mmod{p}$. Then we have shown thus far 
that
\begin{equation}\label{3.8}
\text{card}(\calD_1(\bfn))\le \max_{1\le \bfu\le p^{h-a}}\text{card}(\calD_2(\bfu)).
\end{equation}

Next define the determinant
$$J(\bfy)=\det \left( \begin{matrix} 1+2(2\zet)^{-1}p^ay_1&1+2(2\zet)^{-1}p^ay_2\\
2y_1+6(3\zet)^{-1}p^ay_1^2&2y_2+6(3\zet)^{-1}p^ay_2^2\end{matrix} \right) .$$
One has
$$J(\bfy)\equiv 2(y_2-y_1)\not\equiv 0\mmod{p},$$
and hence we deduce from Lemma \ref{lemma3.1} that $\text{card}(\calD_2(\bfu))\le 6$. In combination 
with (\ref{3.5}) and (\ref{3.8}), this estimate delivers the bound
$$\text{card}(\calC_{a,b}^{2,h/b}(\bfm;\xi,\eta))\le 6p^\ome .$$
We thus conclude from (\ref{3.2}) that $B_{a,b}^{n,h/b}(p)\le 6p^{h-2b+a}$, and this completes the 
proof of the lemma when $a\ge 1$.\par

The proof presented above requires little modification to handle the situation in which $a=0$. In this case, 
we denote by $\calD_1(\bfn;\eta)$ the set of solutions of the system of congruences (\ref{3.4}) with 
$1\le \bfz\le p^{3b}$ and $\bfz\equiv \bfxi\mmod{p}$ for some $\bfxi\in \Xi_0^2(0)$, and for 
which in addition $z_i\not\equiv \eta\mmod{p}$ for $i=1,2$. Then as in the opening paragraph of our 
proof, it follows from (\ref{3.4}) that
\begin{equation}\label{3.9}
\text{card}(\calC_{0,b}^{2,h/b}(\bfm;0,\eta))\le p^\ome \max_{1\le \bfn\le p^{2b+\ome}}
\text{card}(\calD_1(\bfn;\eta)).
\end{equation}
But $\text{card}(\calD_1(\bfn;\eta))=\text{card}(\calD_1(\bfn;0))$, and $\text{card}(\calD_1(\bfn;0))$ 
counts the solutions of the system of congruences
$$\left. \begin{aligned}
y_1^3+y_2^3&\equiv n_3\mmod{p^{2b+\ome}}\\
y_1^2+y_2^2&\equiv n_2\mmod{p^{2b+\ome}}
\end{aligned}\right\},$$
with $1\le \bfy\le p^{2b+\ome}$ satisfying $y_1\not\equiv y_2\mmod{p}$ and $p\nmid y_i$ $(i=1,2)$. 
Write
$$J(\bfy)=\det \left( \begin{matrix} 3y_1^2&3y_2^2\\ 2y_1&2y_2\end{matrix} \right) .$$
Then since $p>3$, we have
$$J(\bfy)=6y_1y_2(y_1-y_2)\not\equiv 0\mmod{p}.$$ 
We therefore conclude from Lemma \ref{lemma3.1} that $\text{card}(\calD_1(\bfn;0))\le 6$. In view of 
(\ref{3.3}), the conclusion of the lemma therefore follows from (\ref{3.9}) when $a=0$.
\end{proof}

\section{The conditioning and pre-congruencing processes}
We recall a consequence of a lemma from \cite{Woo2014b} which permits the mean value $I_{a,b}^2(X)$ 
to be bounded in terms of $K_{c,d}^2(X)$, for suitable parameters $c$ and $d$.

\begin{lemma}\label{lemma4.1}
Let $a$ and $b$ be integers with $1\le a<b$, and let $H$ be any integer with $H\ge 15$. Suppose that 
$b+H\le (2\tet)^{-1}$. Then there exists an integer $h$ with $0\le h<H$ having the property that
$$I_{a,b}^2(X)\ll (M^h)^{8/3}K_{a,b+h}^2(X)+M^{-H}(X/M^b)^4(X/M^a)^{\lam-4}.$$
\end{lemma}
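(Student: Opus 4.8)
The plan is to follow the now-standard "conditioning'' blueprint from the efficient congruencing method, in the form recorded in \cite{Woo2014b}: starting from the mean value $I_{a,b}^2(X)$, which counts solutions weighted by $|\grF_a^2(\bfalp;\xi)^2\grf_b(\bfalp;\eta)^{8}|$, one isolates the block of $b$-adically constrained variables $\eta$ and passes to a mean value in which those variables are additionally split into two residue classes $\bfxi\in\Xi_b^2(\eta)$, i.e. into $\grF_b^2(\bfalp;\eta)$. The obstruction to doing this in one step is the usual one: a positive proportion of the solutions may have the two new variables congruent modulo a high power of $p$, so that the splitting into distinct classes loses too much. The device, exactly as in the cited lemma, is to introduce a parameter $H$ and argue that \emph{either} the solutions concentrate at some intermediate level $b+h$ with $0\le h<H$ --- in which case one gains a factor controlled by the count $K_{a,b+h}^2(X)$ of the more constrained object, at the cost of the volume factor $(M^h)^{8/3}$ coming from the $8$ copies of $\grf$ being redistributed as $\grF_b^2\grf^4$ --- \emph{or} no such concentration occurs down to level $b+H$, and then one is in the "well-spread'' regime where a crude bound suffices, yielding the error term $M^{-H}(X/M^b)^4(X/M^a)^{\lam-4}$ via the trivial estimate for the part of $I_{a,b}^2$ with all $\eta$-variables pairwise close, combined with $J(Y)\ll Y^{\lam+\del}$ (absorbing $\del$ into $\eps$, harmless since $\del$ was chosen tiny in \eqref{2.8}).

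More concretely, I would first recall the precise statement being invoked from \cite{Woo2014b} (the conditioning lemma there, specialised to $k=3$, $m=2$, and to $8$ free copies rather than the generic number), and then verify that its hypotheses translate to the ones stated here. The two points requiring care are: (i) the bookkeeping of exponents --- one must check that the $8$ copies of $\grf_b$ do indeed contribute $(X/M^{b+h})^{4}\cdot(M^h)^{?}$ in such a way that, after matching against the normalisation \eqref{2.12} of $\llbracket K_{a,b+h}^2\rrbracket$, the clean factor is $(M^h)^{8/3}$; this is where the exponent $\tfrac83$ (already flagged in \eqref{2.3} and reappearing in Lemma~\ref{lemma3.2} via the $B_{a,b}^{2,h/b}\le 6p^{h-2b+a}$ bound) enters, and I would want to see the arithmetic line-by-line; (ii) the range condition $b+H\le(2\tet)^{-1}$, which guarantees that all the $p$-adic levels appearing remain below $\tet^{-1}$ so that Lemma~\ref{lemma2.1} and the congruence counts of \S3 are applicable, and that $p^{b+H}\le X$ so the exponential sums $\grf_{b+h}$ are genuinely non-empty.

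The mechanics of the dichotomy step: write $I_{a,b}^2(X)$ as a sum over $\bfxi\in\Xi_b^1$ (a single further lift of $\eta$), then over pairs. Partition according to the largest power $p^{b+h}$, $0\le h\le H$, to which the pair of lifted variables is congruent. For $h<H$ the two variables, once distinct modulo $p^{b+h+1}$, assemble into $\grF_{b+h}^2$, and after accounting for the $p^{b+h}$-many choices of common residue and applying the trivial $|\grf_{b+h}|\ll X/M^{b+h}$ to the leftover copies one recovers a bound by $(\text{vol factor})\cdot K_{a,b+h}^2(X)$; taking the maximum over $h$ and noting there are only $H=O(1)$ terms (with implied constant allowed to depend on $N$, $\eps$) gives the first term. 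The residual case $h=H$ is the "all close'' contribution; here a direct estimate --- Cauchy--Schwarz to decouple $\grF_a^2$ from the $\eta$-block, then Lemma~\ref{lemma2.1} on the $\eta$-block restricted to a single residue class modulo $p^{b+H}$ --- produces $M^{-H}(X/M^b)^4(X/M^a)^{\lam-4}$.

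The main obstacle I anticipate is \emph{not} conceptual but the exponent accounting in step (i): reconciling the weight $(X/M^a)^{m+\Del}(X/M^b)^{6-m}$ in \eqref{2.11} with $(X/M^a)^{m+\Del}(X/M^{b+h})^{6-m}$ in \eqref{2.12}, while the $8-2m=4$ "spectator'' copies of $\grf_b$ get demoted to level $b+h$ and the $p^{b+h}$ common-residue choices get folded in, must conspire to leave precisely $(M^h)^{8/3}$ and nothing else. Getting the fractional power right --- rather than $(M^h)^{2}$ or $(M^h)^{3}$ --- is the delicate point, and it is exactly the point at which the non-diagonal parameter $\Del$ and the choice \eqref{2.3} of $\gra,\grb$ have been engineered to make the subsequent iteration close. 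Everything else is a faithful transcription of \cite[conditioning lemma]{Woo2014b}.
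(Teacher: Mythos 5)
Your proposal takes essentially the same route as the paper: the paper's entire proof of Lemma \ref{lemma4.1} is the single sentence ``This is simply a special case of \cite[Lemma 4.2]{Woo2014b}'', and your sketch of the conditioning dichotomy (concentration at some level $b+h$ versus the well-spread residual case handled via Lemma \ref{lemma2.1} and $J(Y)\ll Y^{\lam+\del}$) is a faithful account of how that cited lemma is proved. The exponent $\tfrac{8}{3}$ that you flag as the delicate point is supplied directly by the statement of the cited lemma under the specialisation $k=3$, $s=4$ used throughout this paper (compare the factors $M^{8h/3}$ in Lemma \ref{lemma4.2} and $(M^{b'-a})^{4/3}$ in Lemma \ref{lemma5.1}), so no fresh exponent accounting is required here.
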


\begin{proof} This is simply a special case of \cite[Lemma 4.2]{Woo2014b}.
\end{proof}

Next we recall a lemma from \cite{Woo2014b} which initiates the iterative process.

\begin{lemma}\label{lemma4.2}
There exists a prime number $p$, with $M<p\le 2M$, and an integer $h$ with $0\le h\le 4B$, for which 
one has
$$J(X)\ll M^{8B+8h/3}K_{0,B+h}^2(X).$$
\end{lemma}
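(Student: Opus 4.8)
The plan is to establish Lemma~\ref{lemma4.2} by converting the mean value $J(X)$ into a count of solutions of the underlying Diophantine system, then introducing a well-spaced congruence class modulo a prime $p\in(M,2M]$ together with a conditioning step that separates the variables into residue classes. First I would recall that $J(X)=J_{6,3}(X)=\oint|f(\bfalp;X)|^{12}\d\bfalp$, and decompose the sum $f(\bfalp;X)$ according to residues modulo $p$: writing $f(\bfalp;X)=\sum_{1\le\xi\le p}\grf_1(\bfalp;\xi)$, and more generally $\grf_c(\bfalp;\xi)=\sum_{\eta}\grf_{c+1}(\bfalp;\eta)$ where $\eta$ ranges over lifts of $\xi$ modulo $p^{c+1}$. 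The aim is to find a prime $p$ for which, after expanding $|f|^{12}$ and applying orthogonality, a positive proportion (in a logarithmic sense) of the count comes from tuples whose variables fall into two well-separated residue classes, so that the six factors can be grouped as $\grF^2_0$-type blocks against a $\grf_{B+h}$-type block, matching the shape of $K^2_{0,B+h}(X)$.

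The key steps, in order: (i) Sum over all primes $p$ with $M<p\le 2M$ — there are $\gg M/\log M$ such primes by the prime number theorem — and bound $J(X)$ by an average over $p$ of the contribution of diagonal-modulo-$p$ solutions plus off-diagonal solutions; this is the standard device from the efficient congruencing literature (cf.\ the treatment in \cite{Woo2014a,Woo2014b}) that lets one fix a single good prime at the cost of a factor $M^\eps$, which is absorbed since $\tet$ is small. (ii) For the fixed prime, iterate the trivial relation $\grf_c=\sum\grf_{c+1}$ a bounded number of times (here $B=3^NN$ steps), tracking at each stage whether the newly split variables are "conditioned" — i.e.\ lie in distinct residue classes modulo the next power of $p$ — via a pigeonhole choice of the integer $h$ with $0\le h\le 4B$. (iii) At the terminal stage, recognise the resulting mean value as one of the form $\oint|\grF^2(\bfalp;\eta)^2\,\grF^2_{B+h}(\bfalp;\eta)^2\,\grf_{B+h}(\bfalp;\eta)^{4}|\d\bfalp$, which by definition is bounded by $K^2_{0,B+h}(X)$; the powers of $M$ collected along the way account for the factor $M^{8B+8h/3}$ (the $8B$ from $B$ doublings each contributing $M^{?}$ via the range of summation, and the $8h/3$ from the conditioning shift, consistent with the $M^{8/3}$-per-unit weighting seen in Lemma~\ref{lemma4.1}).

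I expect the main obstacle to be Step~(ii): bookkeeping the conditioning process so that the exponent of $M$ comes out \emph{exactly} $8B+8h/3$ and so that the tuple of residues surviving to the end genuinely has the separation property demanded by the definition of $\Xi^m(\bfzet)$ (namely $\xi_i\not\equiv\zet_j\mmod p$ for all $i,j$). The delicate point is that at each splitting one must discard the "bad" diagonal-modulo-$p^{c+1}$ coincidences, and one needs the pigeonhole over $h\in\{0,1,\ldots,4B\}$ to locate a scale at which the loss is controlled — this is precisely the mechanism by which a single $h$ is extracted. Since the lemma is quoted from \cite{Woo2014b} ("Next we recall a lemma from \cite{Woo2014b}"), the cleanest route is simply to cite the corresponding result there; a self-contained argument would essentially reproduce the conditioning lemma of that paper, and I would organise it exactly as above, with the one substantive check being that the parameter choices in \eqref{2.8} (in particular $B=3^NN$ and the smallness of $\tet,\del$) make all the auxiliary inequalities, such as $B+h\le(2\tet)^{-1}$, valid so that Lemmas~\ref{lemma4.1} and the Hensel-type input are applicable downstream.
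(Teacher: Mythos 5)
Your proposal matches the paper: the proof given there is a one-line citation of \cite[Lemma 5.1]{Woo2014b}, exactly the route you identify as cleanest, and your sketch of the underlying pre-congruencing/conditioning mechanism (averaging over primes $M<p\le 2M$ to fix a good $p$, pigeonholing over $h$, and landing on the mean value $\oint|\grF^2(\bfalp;\eta)^2\grF^2_{B+h}(\bfalp;\eta)^2\grf_{B+h}(\bfalp;\eta)^4|\d\bfalp$ defining $K^2_{0,B+h}(X)$) is consistent with that cited argument.
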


\begin{proof} Again, this is simply a special case of \cite[Lemma 5.1]{Woo2014b}.
\end{proof}

We now fix the prime number $p$, once and for all, in accordance with the conclusion of Lemma 
\ref{lemma4.2}.

\section{Efficient congruencing and the multigrade combination}
We adapt the treatment of \cite[\S6]{Woo2014b} to the present cubic situation.

\begin{lemma}\label{lemma5.1} Suppose that $a$ and $b$ are integers with $0\le a<b\le \tet^{-1}$, and 
suppose further that $b\ge (1+\frac{2}{3}\Del)a$. Then one has
\begin{equation}\label{5.1}
K_{a,b}^1(X)\ll M^{3b-a}(I_{b,3b}^2(X))^{1/4}(J(X/M^b))^{3/4}.
\end{equation}
Moreover, whenever $b'$ is an integer with
$$2b-a\le b'\le 2b-a+\Del(b-a),$$
one has
\begin{equation}\label{5.2}
K_{a,b}^2(X)\ll M^{b'-2b+a}(M^{b'-a})^{4/3}(I_{b,b'}^2(X))^{1/3}(K_{a,b}^1(X))^{2/3}.
\end{equation}
\end{lemma}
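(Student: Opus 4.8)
The plan is to prove the two estimates of Lemma~\ref{lemma5.1} by the now-standard efficient congruencing manoeuvre, adapted to the cubic setting and to the slight off-diagonal slack encoded in $\Del$. In both cases one starts from the relevant mean value, expands the outer block of variables $\bfxi$ into congruence classes modulo higher powers of $p$, invokes orthogonality to translate the counting of solutions into the combinatorial quantities $B_{a,b}^{n,h}(p)$ controlled by Lemma~\ref{lemma3.2}, and then re-synthesises the resulting sums into the target mean values using H\"older's inequality.

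For \eqref{5.1}: the quantity $K_{a,b}^1(X)$ is a mean value of $|\grF_a^1(\bfalp;\xi)^2\grF_b^2(\bfalp;\eta)^2\grf_b(\bfalp;\eta)^{8-2}|$, i.e.\ with $m=1$ it involves $\grF_a^1(\bfalp;\xi)$, $\grF_b^2(\bfalp;\eta)^2$ and $\grf_b(\bfalp;\eta)^{6}$. First I would fix the residue $\xi$ realising the maximum and, writing each variable underlying $\grF_a^1$ in the form $x=\xi+p^a w$, pass to congruences modulo $p^{3b}$ for the sums $z_1,\ldots$ contributing to $\grF_b^2(\bfalp;\eta)$; the key point is that the differences $(z_i-\eta)$ are being counted modulo $p^{jb}$ $(1\le j\le 3)$, so the number of solutions in each residue class is bounded by $B_{a,b}^{1,3}(p)\le 6$ from Lemma~\ref{lemma3.2}. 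This converts $K_{a,b}^1(X)$ into a count weighted by $\grf_b$'s, and the factor $M^{3b-a}$ arises from the number of admissible $\bfm$ modulo $p^{3b}$ relative to the modulus $p^a$ governing $\xi$. One then applies H\"older with exponents $1/4$ and $3/4$: the $1/4$-part is recognised as $I_{b,3b}^2(X)$ (the $\grF_b^2$-block paired with a long $\grf_{3b}$-tail, using that the $z$-variables now lie in a single class modulo $p^{3b}$, hence are of the form needed to produce $\grf_{3b}$), and the $3/4$-part is a clean moment $J(X/M^b)^{3/4}$ by Lemma~\ref{lemma2.1} together with the translation-dilation invariance of the underlying Vinogradov system. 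The hypothesis $b\ge(1+\tfrac23\Del)a$ is what guarantees the exponent bookkeeping in the normalisation is consistent, i.e.\ that the off-diagonal power of $M$ one loses is absorbed correctly.

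For \eqref{5.2}: now $K_{a,b}^2(X)$ involves $\grF_a^2(\bfalp;\xi)^2$, $\grF_b^2(\bfalp;\eta)^2$ and $\grf_b(\bfalp;\eta)^{4}$. The mechanism is the same but one uses the \emph{second} estimate of Lemma~\ref{lemma3.2}, namely $B_{a,b}^{2,h/b}(p)\le 6p^{h-2b+a}$ with $h=b'$ satisfying $2b-a\le b'\le 2b-a+\Del(b-a)$, so that $\ome=b'-(2b-a)$ satisfies $0\le\ome\le\Del(b-a)$. Expanding $\grF_a^2$ into classes modulo $p^{a+1}$ and then lifting the $\grF_b^2$-variables to congruences modulo $p^{b'}$ (rather than exactly $p^{3b}$ — this is the modification that accommodates the non-diagonal behaviour) produces the factors $M^{b'-2b+a}$ (from the $p^{h-2b+a}$ in the congruence count) and $M^{b'-a}$ (from the number of residue classes $\bfm$). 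H\"older with exponents $1/3$ and $2/3$ then splits the resulting weighted count: the $1/3$-part is $I_{b,b'}^2(X)$ and the $2/3$-part is $K_{a,b}^1(X)^{2/3}$ — here it is essential that after the expansion one can re-form the $\grF_a^1$-block (one of the two $\xi$-variables being "frozen" into the $K^1$ object, the other absorbed into the tail), which is exactly why $K^1$ rather than a plain moment appears on the right.

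The main obstacle I expect is the second step of \eqref{5.2}: correctly organising the double expansion (outer block modulo $p^{a+1}$, then the conditioned block modulo $p^{b'}$) so that the congruence data matches the hypotheses of Lemma~\ref{lemma3.2} exactly — in particular ensuring the non-congruence conditions $\xi_1\not\equiv\xi_2$ and $\eta\not\equiv\xi\mmod p$ are preserved through the substitution $z=\xi+p^a y$, and that the modulus $p^{b'}$ with the extra slack $\ome$ is threaded consistently through both the counting bound and the subsequent recombination into $I_{b,b'}^2$ and $K_{a,b}^1$. Once the combinatorial input is aligned, the H\"older step and the final normalisation via \eqref{2.11}, \eqref{2.12} are routine bookkeeping with the exponents, checking that the powers of $M$ on both sides balance under the definitions \eqref{2.3}--\eqref{2.5}.
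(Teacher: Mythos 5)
Your sketch follows essentially the same route as the paper, whose proof simply invokes \cite[Lemma 6.1]{Woo2014b} with $(s,m)=(4,0)$ for (\ref{5.1}) and $(s,m)=(4,1)$ for (\ref{5.2}), noting that the only change needed is the extra factor $p^{b'-2b+a}$ coming from the weaker congruence bound $B_{a,b}^{2,b'/b}(p)\le 6p^{b'-2b+a}$ of Lemma \ref{lemma3.2} --- which is precisely the modification you identify, and your H\"older exponents and the provenance of each power of $M$ are consistent with this. One caveat worth fixing in a full write-up: in both steps the congruencing/lifting is applied to the variables of the $\grF_a^m(\bfalp;\xi)$ block (those lying in $\Xi_a^m(\xi)$), not to those of $\grF_b^2(\bfalp;\eta)$ as you twice state; for the latter the congruences $(z-\eta)^j\equiv m_j\mmod{p^{jb}}$ would be vacuous, since $z\equiv\eta\mmod{p^b}$ already.
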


\begin{proof} The estimate (\ref{5.1}) is the special case $s=4$, $m=0$ of \cite[Lemma 6.1]{Woo2014b} 
corresponding to exponent $k=3$, in which one takes $b'=3b$. We focus, therefore, on the proof of the 
estimate (\ref{5.2}). Even in this situation, however, the argument of the proof of 
\cite[Lemma 6.1]{Woo2014b} applies without serious modification. Applying the latter with $s=4$ and 
$m=1$, we find that the final conclusion must be modified only to reflect the fact that, in view of Lemma 
\ref{lemma3.2}, one has in present circumstances the bound
$$\text{card}(\calC_{a,b}^{2,b'/b}(\bfm;\xi,\eta))\le 6p^{b'-2b+a},$$
whereas in the discussion following \cite[equation (6.5)]{Woo2014b} one had the sharper bound 
$\text{card}(\calC_{a,b}^{2,b'/b}(\bfm;\xi,\eta))\le 6$, owing to the stronger constraint on $b'$ therein. 
On accounting for the presence of the additional factor $p^{b'-2b+a}$ in the analogue of the discussion 
leading from \cite[equation (6.6)]{Woo2014b} to the conclusion of the proof of 
\cite[Lemma 6.1]{Woo2014b}, the upper bound (\ref{5.2}) follows at once. This completes the proof of 
the lemma.
\end{proof}

We note that when $a$ and $b$ are sufficiently large in terms of $\Del$, then the hypothesis 
$b\ge (1+\frac{2}{3}\Del)a$ in the statement of Lemma \ref{lemma5.1} ensures that
\begin{align*}
2b-a+\Del(b-a)&=(2+\Del)b-(1+\Del)a\ge \left( 2+\Del-\frac{1+\Del}{1+\tfrac{2}{3}\Del}\right) b\\
&=\left( 1+\Del - \frac{\tfrac{1}{3}\Del}{1+\tfrac{2}{3}\Del}\right)b\ge 
\lceil (1+\tfrac{2}{3}\Del)b\rceil .
\end{align*}
We are therefore at liberty to apply Lemma \ref{lemma5.1} with a choice for $b'$ satisfying the condition 
$b'\ge (1+\frac{2}{3}\Del)b$, thereby preparing appropriately for subsequent applications of Lemma 
\ref{lemma5.1}.\par

We next combine the estimates supplied by Lemma \ref{lemma5.1} so as to bound $K_{a,b}^2(X)$ in 
terms of the mean values $I_{b,k_mb}^2(X)$ $(m=0,1)$, in which $k_0=3$ and
$$2-a/b\le k_1\le 2-a/b+\Del(1-a/b).$$

\begin{lemma}\label{lemma5.2}
Suppose that $a$ and $b$ are integers with $0\le a<b\le \tet^{-1}$, and suppose further that 
$b\ge (1+\frac{2}{3}\Del)a$. Then whenever $d$ is an integer with $0\le d\le \Del (b-a)$, one has
$$\llbracket K_{a,b}^2(X)\rrbracket \ll \left( (X/M^b)^{\Lam+\del}\right)^{1/2}
\llbracket I_{b,3b}^2(X)\rrbracket^{1/6}\llbracket I_{b,b'}^2(X)\rrbracket^{1/3},$$
where $b'=2b-a+d$.
\end{lemma}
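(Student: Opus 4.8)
The plan is to derive Lemma \ref{lemma5.2} as a purely formal consequence of Lemma \ref{lemma5.1} by combining the two estimates (\ref{5.1}) and (\ref{5.2}), eliminating the intermediate quantity $K_{a,b}^1(X)$, and then normalising everything via the $\llbracket\cdot\rrbracket$ operator. First I would set $b'=2b-a+d$ and check that the hypothesis $0\le d\le\Del(b-a)$ is precisely what is needed so that $b'$ satisfies the constraint $2b-a\le b'\le 2b-a+\Del(b-a)$ required for (\ref{5.2}); the additional hypothesis $b\ge(1+\tfrac23\Del)a$ then guarantees, via the displayed computation preceding this lemma, that $b'\ge(1+\tfrac23\Del)b$, which is not needed for the present statement but sets up the next iteration. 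I would then substitute (\ref{5.1}) into (\ref{5.2}): raising (\ref{5.1}) to the power $2/3$ gives $(K_{a,b}^1(X))^{2/3}\ll M^{(2/3)(3b-a)}(I_{b,3b}^2(X))^{1/6}(J(X/M^b))^{1/2}$, and inserting this into (\ref{5.2}) yields
$$K_{a,b}^2(X)\ll M^{b'-2b+a}(M^{b'-a})^{4/3}M^{(2/3)(3b-a)}(I_{b,b'}^2(X))^{1/3}(I_{b,3b}^2(X))^{1/6}(J(X/M^b))^{1/2}.$$

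Next I would convert this inequality into a statement about the normalised quantities. Using the definitions (\ref{2.10})--(\ref{2.12}), the factor $J(X/M^b)^{1/2}$ becomes $\llbracket J(X/M^b)\rrbracket^{1/2}$ times $(X/M^b)^{(6+\Del)/2}$, and then (\ref{2.13}) (equivalently the bound $\llbracket J(Y)\rrbracket<Y^{\Lam+\del}$ valid for $Y\ge X^{1/2}$, which applies since $b\le\tet^{-1}$ forces $X/M^b\ge X^{1/2}$) replaces $\llbracket J(X/M^b)\rrbracket$ by $(X/M^b)^{\Lam+\del}$. Meanwhile $I_{b,3b}^2(X)$ and $I_{b,b'}^2(X)$ are replaced by $\llbracket I_{b,3b}^2(X)\rrbracket$ and $\llbracket I_{b,b'}^2(X)\rrbracket$ times their respective normalising powers of $X/M^b$ and $X/M^{3b}$, resp.\ $X/M^{b'}$. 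The entire content of the proof then reduces to verifying that all the explicit powers of $M$ (and of $X$) cancel: the powers of $M$ coming from $M^{b'-2b+a}$, $(M^{b'-a})^{4/3}$, $M^{(2/3)(3b-a)}$, and from the denominators in (\ref{2.11}), (\ref{2.12}) must exactly balance against $(X/M^b)^{(6+\Del)/2}$ and the normalising factors from the $I$'s, leaving only the asserted $((X/M^b)^{\Lam+\del})^{1/2}\llbracket I_{b,3b}^2(X)\rrbracket^{1/6}\llbracket I_{b,b'}^2(X)\rrbracket^{1/3}$.

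The bookkeeping is the only substantive part, so I would organise it carefully. Writing $T=X/M^b$, the target exponent of $T$ on the right is $\tfrac12(\Lam+\del)$ plus whatever the $I$-normalisations contribute; since $\llbracket I_{b,3b}^2(X)\rrbracket$ carries denominator $(X/M^b)^{2+\Del}(X/M^{3b})^{4}$ and $\llbracket I_{b,b'}^2(X)\rrbracket$ carries denominator $(X/M^b)^{2+\Del}(X/M^{b'})^{4}$, taking the $1/6$ and $1/3$ powers and collecting, together with the numerator $(X/M^b)^{3+\Del/2}$ from $J(X/M^b)^{1/2}$ and the denominator $(X/M^b)^{2+\Del}(X/M^b)^{4}$ from $\llbracket K_{a,b}^2(X)\rrbracket$ itself, the net power of $X$ must come out to zero and the net power of $M^b$, $M^{3b}$, $M^{b'}$, $M^a$ must reproduce exactly the $M$-prefactors $M^{b'-2b+a}(M^{b'-a})^{4/3}M^{2b-2a/3}$ displayed above. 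I expect the main obstacle — really the only place an error could creep in — to be this exponent arithmetic, in particular correctly tracking the exponent $\Del$ attached to the "$m+\Del$" slot in (\ref{2.11})--(\ref{2.12}) versus the ordinary "$6-m$" slot, and confirming that the $d$-dependence (i.e.\ the dependence on $b'$) genuinely disappears into the $\llbracket I_{b,b'}^2(X)\rrbracket$ term rather than leaving a stray $M^d$. Once that ledger is confirmed to balance, the lemma follows immediately.
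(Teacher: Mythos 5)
Your proposal follows the paper's proof exactly: substitute (\ref{5.1}) into (\ref{5.2}), then normalise everything via (\ref{2.10})--(\ref{2.12}) and the bound $\llbracket J(X/M^b)\rrbracket<(X/M^b)^{\Lam+\del}$. One small correction to your expected ledger: the powers of $M$ do not cancel exactly but leave a residual factor $M^\Ome$ with $\Ome=d+\Del(a-b)$, and it is the hypothesis $d\le \Del(b-a)$ --- used here a second time, beyond making $b'$ admissible for (\ref{5.2}) --- that forces $\Ome\le 0$ and allows this factor to be discarded.
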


\begin{proof} By substituting the estimate for $K_{a,b}^1(X)$ provided by equation (\ref{5.1}) of Lemma 
\ref{lemma5.1} into (\ref{5.2}), we find that
$$K_{a,b}^2(X)\ll M^d\bigl( (M^{b'-a})^4I_{b,b'}^2(X)\bigr)^{1/3}
\bigl( (M^{3b-a})^4I_{b,3b}^2(X)\bigr)^{1/6}\left( J(X/M^b)\right)^{1/2}.$$
On recalling (\ref{2.10}) to (\ref{2.12}), therefore, we deduce that
$$\llbracket K_{a,b}^2(X)\rrbracket \ll M^\Ome \llbracket I_{b,3b}^2(X)\rrbracket^{1/6} 
\llbracket I_{b,b'}^2(X)\rrbracket^{1/3} \left( (X/M^b)^{\Lam+\del}\right)^{1/2},$$
where
$$\Ome=d+\Del(a-b)\le \Del(b-a)+\Del(a-b)=0.$$
Since $\Ome\le 0$, the conclusion of the lemma is now immediate.
\end{proof}

We next study a multistep multigrade combination stemming from Lemma \ref{lemma5.2}. We begin by 
introducing some additional notation. We recall that $R$ is a positive integer sufficiently large in terms of 
$\Del$. We consider $R$-tuples of integers $(m_1,\ldots ,m_R)\in \{0,1\}^R$, to each of which we 
associate an $R$-tuple of integers $\bfh=(h_1(\bfm),\ldots ,h_R(\bfm))\in [0,\infty)^R$. The integral 
tuples $\bfh(\bfm)$ will be fixed as the iteration proceeds, with $h_n(\bfm)$ depending at most on the 
first $n$ coordinates of $(m_1,\ldots ,m_R)$. We may abuse notation in some circumstances by writing 
$h_n(\bfm,m_n)$ or $h_n(m_1,\ldots ,m_{n-1},m_n)$ in place of $h_n(m_1,\ldots ,m_R)$, reflecting the 
latter implicit dependence. We suppose that a positive integer $b$ has already been fixed. We then define 
the sequences $(a_n)=(a_n(\bfm;\bfh))$ and $(b_n)=(b_n(\bfm;\bfh))$ by putting
\begin{equation}\label{5.3}
a_0=\lfloor b/(1+\tfrac{2}{3}\Del)\rfloor\quad \text{and}\quad b_0=b,
\end{equation}
and then applying the iterative relations, for $1\le n\le R$, given by
\begin{equation}\label{5.4}
a_n=b_{n-1}
\end{equation}
and
\begin{equation}\label{5.5}
b_n=\begin{cases}3b_{n-1}+h_n(\bfm),&\text{when $m_n=0$,}\\
2b_{n-1}-a_{n-1}+\lfloor \Del (b_{n-1}-a_{n-1})\rfloor +h_n(\bfm),&\text{when $m_n=1$.}
\end{cases}
\end{equation}
Next, we define the quantity $\Tet_n(\bfm;\bfh)$ for $0\le n\le R$ by writing
\begin{equation}\label{5.6}
\Tet_n(\bfm;\bfh)=(X/M^b)^{-\Lam-\del}\llbracket K_{a_n,b_n}^2(X)\rrbracket 
+M^{-12\cdot 3^Rb}.
\end{equation}
Finally, we put
$$\phi_0=1/6\quad \text{and}\quad \phi_1=1/3.$$

\begin{lemma}\label{lemma5.3}
Suppose that $a$ and $b$ are integers with $0<a<b\le (16\cdot 3^{2R}R\tet)^{-1}$, and 
suppose further that $a\le b/(1+\tfrac{2}{3}\Del)$. Then there exists a choice for 
$\bfh(\bfm)\in \{0,1\}^R$, satisfying the condition that $0\le h_n(\bfm)\le 15\cdot 3^Rb$ 
$(1\le n\le R)$, and for which one has
$$(X/M^b)^{-\Lam-\del}\llbracket K_{a,b}^2(X)\rrbracket \ll \prod_{\bfm\in\{0,1\}^R}
\Tet_R(\bfm;\bfh)^{\phi_{m_1}\ldots \phi_{m_R}}.$$
\end{lemma}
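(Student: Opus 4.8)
The plan is to iterate Lemma~\ref{lemma5.2} exactly $R$ times, following the tree of binary choices $\bfm=(m_1,\ldots,m_R)\in\{0,1\}^R$. First I would record the relationship between the sequences $(a_n),(b_n)$ defined in (\ref{5.3})--(\ref{5.5}) and the hypotheses of Lemma~\ref{lemma5.2}. At stage $n$ we wish to apply that lemma with $(a,b)$ replaced by $(a_{n-1},b_{n-1})$; this requires $a_{n-1}<b_{n-1}\le\tet^{-1}$ and $b_{n-1}\ge(1+\tfrac23\Del)a_{n-1}$. The initial choice $a_0=\lfloor b/(1+\tfrac23\Del)\rfloor$ and $b_0=b$ makes the ratio hypothesis hold at $n=1$ by the assumption $a\le b/(1+\tfrac23\Del)$, and I would check inductively, using (\ref{5.4}) and the paragraph following Lemma~\ref{lemma5.1} (where it is shown that $2b-a+\Del(b-a)\ge\lceil(1+\tfrac23\Del)b\rceil$ whenever $b\ge(1+\tfrac23\Del)a$), that the ratio hypothesis $b_n\ge(1+\tfrac23\Del)a_n$ propagates: when $m_n=0$ one has $b_n=3b_{n-1}+h_n\ge3a_n$, and when $m_n=1$ one has $b_n\ge 2b_{n-1}-a_{n-1}+\lfloor\Del(b_{n-1}-a_{n-1})\rfloor\ge\lceil(1+\tfrac23\Del)b_{n-1}\rceil=\lceil(1+\tfrac23\Del)a_n\rceil$. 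The size constraint $b_n\le\tet^{-1}$ is where I would spend a little care: since each step at worst triples $b$ and adds $h_n\le 15\cdot 3^R b$, after $R$ steps $b_R\le 3^R b+R\cdot 15\cdot 3^{2R}b\le 16\cdot 3^{2R}R\,b$, and the hypothesis $b\le(16\cdot 3^{2R}R\tet)^{-1}$ is precisely what guarantees $b_R\le\tet^{-1}$ (and hence all intermediate $b_n\le\tet^{-1}$ as well). The permissible range $0\le h_n\le 15\cdot 3^R b$ also has to contain whatever value of $d$ (hence of $h_n$) the application of Lemma~\ref{lemma5.2} forces; the bound $d\le\Del(b_{n-1}-a_{n-1})\le\Del b_{n-1}\le\Del\cdot 3^{n-1}b\le 15\cdot 3^R b$ settles this.

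Next I would run the iteration. Apply Lemma~\ref{lemma5.2} with $(a,b)=(a_{n-1},b_{n-1})$; it produces, for \emph{each} admissible $d$ in $[0,\Del(b_{n-1}-a_{n-1})]$, the bound
$$\llbracket K_{a_{n-1},b_{n-1}}^2(X)\rrbracket\ll\bigl((X/M^{b_{n-1}})^{\Lam+\del}\bigr)^{1/2}\llbracket I_{b_{n-1},3b_{n-1}}^2(X)\rrbracket^{1/6}\llbracket I_{b_{n-1},b'}^2(X)\rrbracket^{1/3},$$
with $b'=2b_{n-1}-a_{n-1}+d$. The two factors $\llbracket I^2_{b_{n-1},3b_{n-1}}\rrbracket$ and $\llbracket I^2_{b_{n-1},b'}\rrbracket$ are the places where one must feed back a $K$-mean value at the next level: by the conditioning/pre-congruencing apparatus (Lemma~\ref{lemma4.1}, plus the definition of $\Tet_n$ which already absorbs the $M^{-H}$ error term into the $M^{-12\cdot3^Rb}$ tail), each $\llbracket I^2_{a_n,b_n}\rrbracket$ is controlled — after choosing the integer $h_n$ optimally in the relevant range — by $M^{(\text{harmless})}\llbracket K^2_{a_n,b_n}(X)\rrbracket$ up to the admissible error, where $a_n=b_{n-1}$ and $b_n$ is given by (\ref{5.5}) with the appropriate choice $m_n\in\{0,1\}$ (namely $m_n=0$ feeding the $3b_{n-1}$ term, $m_n=1$ feeding the $b'$ term). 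This is the step that fixes $\bfh(\bfm)$: at node $(m_1,\ldots,m_{n-1})$ we have two children, obtained by taking $m_n=0$ or $m_n=1$, and $h_n(m_1,\ldots,m_n)$ is the integer delivered by the relevant application of Lemma~\ref{lemma4.1} (or, for the $\lfloor\Del(\cdot)\rfloor$ part, the optimal $d$). Iterating, $\llbracket K^2_{a,b}\rrbracket$ is bounded by a product over the $2^R$ leaves, the exponent attached to leaf $\bfm$ being $\prod_{n=1}^R\phi_{m_n}=\phi_{m_1}\cdots\phi_{m_R}$ since each application contributes a factor $\phi_0=1/6$ along an $m_n=0$ branch and $\phi_1=1/3$ along an $m_n=1$ branch. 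One checks $\sum_{\bfm}\phi_{m_1}\cdots\phi_{m_R}=(1/6+1/3)^R=(1/2)^R$, so the accumulated $(X/M^{b_n})^{(\Lam+\del)/2}$ factors telescope correctly against the normalising factor $(X/M^b)^{-\Lam-\del}$ on the left; more precisely, dividing through by $(X/M^b)^{\Lam+\del}$ and collecting, the half-powers of $(X/M^{b_{n-1}})^{\Lam+\del}$ at each stage reassemble (after the $n$th stage replaces one copy by its square root times a bracketed $\Tet$) into exactly $\prod_\bfm\Tet_R(\bfm;\bfh)^{\phi_{m_1}\cdots\phi_{m_R}}$, with all stray powers of $M$ being bounded (they are $\le M^{O(\Del\cdot 3^R b)}=M^{o(1)\cdot\text{(exponent)}}$, absorbed into $\ll$) and the various $M^{-H}$-type errors absorbed into the $M^{-12\cdot3^Rb}$ summand of $\Tet_R$.

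The main obstacle, and the part deserving the most care, is bookkeeping the \emph{error terms} and the \emph{powers of $M$} through $R$ nested applications. Each invocation of Lemma~\ref{lemma4.1} and Lemma~\ref{lemma5.2} introduces both a genuine main term and an admissible error of the shape $M^{-H}(\cdots)$ or $M^{\text{(small)}}(\cdots)$; one must verify that, after raising to fractional powers $\phi_{m_1}\cdots\phi_{m_n}$ and multiplying $2^R$ of them together, the cumulative error is still dominated by a single term of size $M^{-12\cdot3^Rb}$ relative to the normalised quantity. This is exactly why $\Tet_n$ is \emph{defined} with that additive tail in (\ref{5.6}): the inductive claim to carry is that $(X/M^b)^{-\Lam-\del}\llbracket K^2_{a_n,b_n}\rrbracket\ll\prod_{(m_1,\ldots,m_n)}\Tet_n(\bfm;\bfh)^{\phi_{m_1}\cdots\phi_{m_n}}$, and at each step one must re-express a $\Tet_n$ in terms of two $\Tet_{n+1}$'s — absorbing the new error into the tail and the new $M$-powers into the implied constant — using that $H$ in Lemma~\ref{lemma4.1} may be taken as large as the constraint $b_n+H\le(2\tet)^{-1}$ allows, which the hypothesis $b\le(16\cdot3^{2R}R\tet)^{-1}$ again guarantees is comfortably larger than $12\cdot3^Rb$. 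Granting this careful but routine propagation of errors and constants, the case $n=R$ is the assertion of the lemma.
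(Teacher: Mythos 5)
Your proposal is correct and follows essentially the same route as the paper, which itself simply defers to the proof of \cite[Lemma 7.3]{Woo2014b}: iterate Lemma \ref{lemma5.2} through the binary tree indexed by $\bfm$, convert each $\llbracket I^2\rrbracket$ back to a $\llbracket K^2\rrbracket$ via Lemma \ref{lemma4.1} (which fixes $h_n(\bfm)$), and absorb the accumulated error terms into the $M^{-12\cdot 3^Rb}$ tail built into the definition of $\Tet_n$. Your verifications of the ratio hypothesis $b_n\ge(1+\tfrac{2}{3}\Del)a_n$, the size bound $b_R\le\tet^{-1}$, and the exponent bookkeeping via $\phi_0+\phi_1=\tfrac12$ are exactly the checks the cited argument performs.
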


\begin{proof} A comparison of Lemma \ref{lemma5.2} above with \cite[Lemma 7.2]{Woo2014b} reveals 
that the argument of the proof of \cite[Lemma 7.3]{Woo2014b} applies in the present situation, mutatis 
mutandis, to establish the conclusion of the lemma. We note here that our Lemma \ref{lemma4.1} above 
serves as a substitute for \cite[Lemma 4.2]{Woo2014b} for this purpose.
\end{proof}

\section{The latent monograde process}
We next convert the block estimate encoded in Lemma \ref{lemma5.3} into a single monograde estimate 
that can be incorporated into our iterative method. We begin by recalling an elementary lemma from our 
previous work \cite{Woo2014a}.

\begin{lemma}\label{lemma6.1} Suppose that $z_0,\ldots ,z_l\in \dbC$, and that $\bet_i$ and $\gam_i$ 
are positive real numbers for $0\le i\le l$. Put $\Ome=\bet_0\gam_0+\ldots +\bet_l\gam_l$. Then one 
has
$$|z_0^{\bet_0}\ldots z_l^{\bet_l}|\le \sum_{i=0}^l|z_i|^{\Ome/\gam_i}.$$
\end{lemma}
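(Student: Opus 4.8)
The statement is a weighted AM--GM inequality packaged in a slightly unusual normalisation, so the plan is to reduce it to the classical form. First I would dispose of trivialities: if some $z_i=0$ then the left-hand side is $0$ and the inequality holds, so I may assume $z_i\ne 0$ for all $i$, and indeed it suffices to prove it for non-negative reals $|z_i|$ in place of $z_i$ since both sides depend only on the moduli. Write $t_i=|z_i|^{\Ome/\gam_i}\ge 0$ for $0\le i\le l$, and set $\lam_i=\bet_i\gam_i/\Ome$. Since $\bet_i,\gam_i>0$ and $\Ome=\sum_{i=0}^l\bet_i\gam_i$, the $\lam_i$ are positive and satisfy $\sum_{i=0}^l\lam_i=1$.

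The key observation is then the identity
\[
|z_0^{\bet_0}\ldots z_l^{\bet_l}|=\prod_{i=0}^l|z_i|^{\bet_i}=\prod_{i=0}^l\bigl(|z_i|^{\Ome/\gam_i}\bigr)^{\bet_i\gam_i/\Ome}=\prod_{i=0}^lt_i^{\lam_i}.
\]
Now the weighted arithmetic--geometric mean inequality, applied with the weights $\lam_i$ (which sum to $1$) and the non-negative reals $t_i$, yields
\[
\prod_{i=0}^lt_i^{\lam_i}\le \sum_{i=0}^l\lam_i t_i\le \sum_{i=0}^l t_i=\sum_{i=0}^l|z_i|^{\Ome/\gam_i},
\]
where the final inequality uses $\lam_i\le 1$ for each $i$, together with $t_i\ge 0$. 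Combining the displayed identity with this chain gives exactly the asserted bound.

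There is really no serious obstacle here; the only point requiring a moment's care is the bookkeeping that $\sum_i\lam_i=1$ (forced by the definition $\Ome=\sum_i\bet_i\gam_i$) and that each $\lam_i\le 1$, so that passing from $\sum_i\lam_i t_i$ to $\sum_i t_i$ is legitimate — this is where the specific shape of $\Ome$ is used, and it is what makes the right-hand side a clean sum of powers rather than a weighted sum. One could alternatively invoke Young's inequality inductively, but the single application of weighted AM--GM is the most economical route, and it is the version I would write up.
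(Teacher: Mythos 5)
Your proof is correct. Note that the paper itself offers no argument here: it simply cites \cite[Lemma 8.1]{Woo2014a}, where the inequality is established by the even more elementary ``dominant term'' device: choosing $j$ so that $|z_j|^{1/\gam_j}=\max_i|z_i|^{1/\gam_i}$, one writes
$$\prod_{i=0}^l|z_i|^{\bet_i}=\prod_{i=0}^l\bigl(|z_i|^{1/\gam_i}\bigr)^{\bet_i\gam_i}\le \bigl(|z_j|^{1/\gam_j}\bigr)^{\bet_0\gam_0+\ldots+\bet_l\gam_l}=|z_j|^{\Ome/\gam_j}\le \sum_{i=0}^l|z_i|^{\Ome/\gam_i}.$$
Your route through the weighted AM--GM inequality with weights $\lam_i=\bet_i\gam_i/\Ome$ is equally valid and your bookkeeping (the identity $\prod_i t_i^{\lam_i}=\prod_i|z_i|^{\bet_i}$, the normalisation $\sum_i\lam_i=1$, and the final passage from $\sum_i\lam_i t_i$ to $\sum_i t_i$ using $0<\lam_i\le 1$ and $t_i\ge 0$) is all in order; the only difference is that you invoke convexity where the original argument needs nothing beyond comparing each factor with the largest one. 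Both yield the stated bound, and yours has the merit of being self-contained rather than deferred to the earlier paper.
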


\begin{proof} This is \cite[Lemma 8.1]{Woo2014a}.
\end{proof}

Before proceeding further, we introduce some additional notation. Define the positive number $s_0$ by 
means of the relation
\begin{equation}\label{6.1}
s_0^R=\frac{\tet_+^{R+1}-\tet_-^{R+1}}{\tet_+-\tet_-}-\frac{\tet_+\tet_-}{2(1+\tfrac{2}{3}\Del)}
\left( \frac{\tet_+^R-\tet_-^R}{\tet_+-\tet_-}\right) ,
\end{equation}
in which $\tet_\pm$ are defined as in (\ref{2.4}). We recall that, in view of (\ref{2.7}), one has $s_0>4$. 
Next we make use of a new pair of sequences $(\atil_n)=(\atil_n(\bfm))$ and $(\btil_n)=(\btil_n(\bfm))$ 
defined by means of the relations
\begin{equation}\label{6.2}
\atil_0=1/(1+\tfrac{2}{3}\Del)\quad \text{and}\quad \btil_0=1,
\end{equation}
and then, when $1\le n\le R$, by
\begin{equation}\label{6.3}
\atil_n=\btil_{n-1}
\end{equation}
and
\begin{equation}\label{6.4}
\btil_n=\begin{cases} 3\btil_{n-1},&\text{when $m_n=0$,}\\
2\btil_{n-1}-\atil_{n-1}+\Del (\btil_{n-1}-\atil_{n-1}),&\text{when $m_n=1$.}
\end{cases}
\end{equation}
We then define
\begin{equation}\label{6.5}
k_\bfm=\btil_R(\bfm)\quad \text{and}\quad \rho_\bfm=\btil_R(\bfm)(4/s_0)^R\quad \text{for}\quad 
\bfm\in \{0,1\}^R.
\end{equation}

\begin{lemma}\label{lemma6.2}
Suppose that $\Lam\ge 0$, let $a$ and $b$ be integers with
$$0\le a<b\le (20\cdot 3^{2R}R\tet)^{-1},$$
and suppose further that $a\le b/(1+\frac{2}{3}\Del)$. Suppose in addition that there are real numbers 
$\psi$, $c$ and $\gam$, with
$$0\le c\le (2\del)^{-1}\tet,\quad \gam\ge -4b\quad \text{and}\quad \psi\ge 0,$$
such that
\begin{equation}\label{6.6}
X^\Lam M^{\Lam \psi}\ll X^{c\del}M^{-\gam}\llbracket K_{a,b}^2(X)\rrbracket .
\end{equation}
Then, for some $\bfm\in \{0,1\}^R$, there is a real number $h$ with $0\le h\le 16\cdot 3^{2R}b$, and 
positive integers $a'$ and $b'$ with $a'\le b'/(1+\frac{2}{3}\Del)$, such that
\begin{equation}\label{6.7}
X^\Lam M^{\Lam \psi'}\ll X^{c'\del}M^{-\gam'}\llbracket K_{a',b'}^2(X)\rrbracket ,
\end{equation}
where $\psi'$, $c'$, $\gam'$ and $b'$ are real numbers satisfying the conditions
$$\psi'=\rho_\bfm (\psi+\tfrac{1}{2}b),\quad c'=\rho_\bfm (c+1),\quad \gam'=\rho_\bfm \gam,\quad 
b'=k_\bfm b+h.$$
Moreover, the real number $k_\bfm$ satisfies $(1+\frac{2}{3}\Del)^R\le k_\bfm\le 3^R$.
\end{lemma}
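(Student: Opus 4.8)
The plan is to combine the block estimate from Lemma \ref{lemma5.3} with the elementary convexity device of Lemma \ref{lemma6.1}, exactly in the spirit of the passage from a multigrade block bound to a single monograde iterate in \cite[\S8]{Woo2014a}. First I would take the hypothesis (\ref{6.6}) and feed it into Lemma \ref{lemma5.3}, which requires the upper bound on $b$ (here $b\le (20\cdot 3^{2R}R\tet)^{-1}$, comfortably stronger than the $(16\cdot 3^{2R}R\tet)^{-1}$ demanded there) and the side condition $a\le b/(1+\tfrac23\Del)$. That lemma supplies a choice of $\bfh(\bfm)\in\{0,1\}^R$ with $0\le h_n(\bfm)\le 15\cdot 3^Rb$ and
$$(X/M^b)^{-\Lam-\del}\llbracket K_{a,b}^2(X)\rrbracket \ll \prod_{\bfm\in\{0,1\}^R}\Tet_R(\bfm;\bfh)^{\phi_{m_1}\ldots\phi_{m_R}}.$$
Substituting this into (\ref{6.6}) and recalling the definition (\ref{5.6}) of $\Tet_R$, one obtains $X^\Lam M^{\Lam\psi}$ bounded by $X^{c\del}M^{-\gam}(X/M^b)^{\Lam+\del}$ times a product of the quantities $\llbracket K_{a_R(\bfm),b_R(\bfm)}^2(X)\rrbracket$ (plus the harmless error term $M^{-12\cdot 3^Rb}$, which is absorbed because $\Lam\ge 0$ and the exponents are bounded).

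Next I would apply Lemma \ref{lemma6.1} with the $z_\bfm=\Tet_R(\bfm;\bfh)$, the weights $\bet_\bfm=\phi_{m_1}\cdots\phi_{m_R}$, and a carefully chosen set of $\gam_\bfm$. The natural choice is $\gam_\bfm=1/\rho_\bfm$ where $\rho_\bfm=\btil_R(\bfm)(4/s_0)^R$ as in (\ref{6.5}); the key arithmetic fact, which is precisely where the definition (\ref{6.1}) of $s_0$ and the condition (\ref{2.7}) on $R$ are used, is that $\Ome:=\sum_\bfm \bet_\bfm\gam_\bfm=\sum_\bfm \phi_{m_1}\cdots\phi_{m_R}/\rho_\bfm$ evaluates to $1$. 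This identity follows because $\sum_\bfm \phi_{m_1}\cdots\phi_{m_R}\btil_R(\bfm)^{-1}\cdot(s_0/4)^R$ telescopes: writing $T_n=\sum_{\bfm}\phi_{m_1}\cdots\phi_{m_n}\btil_n(\bfm)^{-1}$ and using the recursions (\ref{6.3})–(\ref{6.4}) together with $\phi_0=1/6$, $\phi_1=1/3$, one checks that the relevant generating quantities satisfy a linear two-term recursion with characteristic roots $\tet_\pm$, whose solution matches (\ref{6.1}) with $s_0^R$ in the denominator — forcing $\Ome=1$. Lemma \ref{lemma6.1} then yields $\prod_\bfm \Tet_R(\bfm;\bfh)^{\bet_\bfm}\le \sum_\bfm \Tet_R(\bfm;\bfh)^{\rho_\bfm}$, so a single term $\bfm$ dominates (up to the factor $2^R$, absorbed into $\ll$), giving
$$X^\Lam M^{\Lam\psi}\ll X^{c\del}M^{-\gam}(X/M^b)^{\Lam+\del}\llbracket K_{a_R(\bfm),b_R(\bfm)}^2(X)\rrbracket^{\rho_\bfm}.$$

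It remains to normalise exponents and verify the stated shapes of $\psi',c',\gam',b'$. Set $a'=a_R(\bfm)$ and $b'=b_R(\bfm)$. Writing $X/M^b=X\cdot M^{-b}$ and $M=X^\tet$, the factor $(X/M^b)^{\Lam+\del}$ contributes $X^\Lam$ raised to a power and $M^{\Lam(\cdot)}$ contributions; after raising the whole inequality to the power $\rho_\bfm^{-1}\cdot\rho_\bfm=1$—more precisely, after isolating $\llbracket K_{a',b'}^2\rrbracket^{\rho_\bfm}$ and taking $\rho_\bfm$-th roots of the competing powers—one reads off $\psi'=\rho_\bfm(\psi+\tfrac12 b)$, $c'=\rho_\bfm(c+1)$, $\gam'=\rho_\bfm\gam$; the $\tfrac12 b$ and the $+1$ arise respectively from the $M^{-\Lam b}$-part of $(X/M^b)^{\Lam}$ contributing $\tfrac12 b$ after the square root in Lemma \ref{lemma5.2} is accounted for inside Lemma \ref{lemma5.3}, and from the $X^\del$-part contributing one extra $\del$. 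The bound $b'=k_\bfm b+h$ with $k_\bfm=\btil_R(\bfm)$ comes from comparing the integer recursion (\ref{5.3})–(\ref{5.5}) for $(a_n,b_n)$ with the real recursion (\ref{6.2})–(\ref{6.4}) for $(\atil_n,\btil_n)$: the floors and the accumulated $h_n(\bfm)$ contribute at most $h\le 16\cdot 3^{2R}b$. The two-sided bound $(1+\tfrac23\Del)^R\le k_\bfm\le 3^R$ is immediate by induction on $n$ from (\ref{6.4}), using $\btil_{n-1}\ge\atil_{n-1}$ (equality only at $n=1$) so each step multiplies $\btil$ by a factor between $1+\tfrac23\Del$ and $3$; and $a'\le b'/(1+\tfrac23\Del)$ follows since $a_R=b_{R-1}$ and the last step grows $b$ by a factor at least $1+\tfrac23\Del$ (one must check this in the $m_R=1$ case using $b_{R-1}\ge(1+\tfrac23\Del)a_{R-1}$, which propagates from $a_0\le b_0/(1+\tfrac23\Del)$). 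Finally the constraint $0\le c\le (2\del)^{-1}\tet$ together with $b$ bounded guarantees all intermediate $b_n$ stay below $\tet^{-1}$ so that Lemmas \ref{lemma5.1}–\ref{lemma5.3} legitimately apply. The main obstacle is the exact evaluation $\Ome=1$: everything else is bookkeeping, but this identity is the linchpin, and it is exactly the reason $s_0$ was defined by (\ref{6.1}) and $R$ chosen to satisfy (\ref{2.7}) (which guarantees $s_0>4$, hence $\rho_\bfm>\btil_R(\bfm)\ge 1$, keeping the exponents usable).
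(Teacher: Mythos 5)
Your overall architecture is the paper's: feed (\ref{6.6}) into Lemma \ref{lemma5.3}, apply Lemma \ref{lemma6.1} to the resulting product over $\bfm\in\{0,1\}^R$, discard the term $M^{-12\cdot 3^Rb}$, and compare the integer recursion (\ref{5.3})--(\ref{5.5}) with the real one (\ref{6.2})--(\ref{6.4}) to control $a_R$, $b_R$ and $k_\bfm$. But the step you yourself call the linchpin is wrong in two linked ways. First, the identity you claim, $\sum_\bfm\phi_{m_1}\cdots\phi_{m_R}/\rho_\bfm=1$, is false: what the recursions (\ref{6.2})--(\ref{6.4}) make linear, and hence computable by a two-term recurrence with characteristic roots $\tet_\pm$, is the weighted sum $B_R=\sum_\bfm\phi_{m_1}\cdots\phi_{m_R}\btil_R(\bfm)$ of the $\btil_R(\bfm)$ themselves, not of their reciprocals; the definition (\ref{6.1}) is rigged exactly so that $4^RB_R=s_0^R$, i.e.\ $B_R=(s_0/4)^R$, and (\ref{2.7}) then gives $B_R>1$. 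Since the $\btil_R(\bfm)$ are not all equal, $\sum_\bfm\bet_\bfm/\rho_\bfm=B_R\sum_\bfm\bet_\bfm\btil_R(\bfm)^{-1}$ does not collapse to $1$, and no telescoping is available for the reciprocals.

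Second, and independently, your choice of weights points the inequality the wrong way. The correct application takes $\gam_\bfm=\btil_R(\bfm)$, so that $\Ome=B_R$ and Lemma \ref{lemma6.1} gives $\prod_\bfm\Tet_R^{\bet_\bfm}\le\sum_\bfm\Tet_R^{B_R/\btil_R(\bfm)}=\sum_\bfm\Tet_R^{1/\rho_\bfm}$; since the product is bounded below by $W:=X^{-(c+1)\del}M^{\Lam(\psi+\frac12 b)+\gam}$, some single term satisfies $\Tet_R(\bfm)^{1/\rho_\bfm}\gg W$, i.e.\ $\Tet_R(\bfm)\gg W^{\rho_\bfm}$, which is exactly what produces $\psi'=\rho_\bfm(\psi+\frac12b)$, $c'=\rho_\bfm(c+1)$ and $\gam'=\rho_\bfm\gam$. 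Your version ($\Ome=1$, exponent $\rho_\bfm$ per term) would give $\Tet_R(\bfm)\gg W^{1/\rho_\bfm}$ and hence the reciprocals $(\psi+\frac12b)/\rho_\bfm$, $(c+1)/\rho_\bfm$, $\gam/\rho_\bfm$: you cannot ``read off'' the stated $\psi',c',\gam'$ from your displayed inequality $X^\Lam M^{\Lam\psi}\ll\cdots\llbracket K\rrbracket^{\rho_\bfm}$, since isolating $\llbracket K\rrbracket$ there requires taking $\rho_\bfm$-th roots, not $\rho_\bfm$-th powers. This is not cosmetic: the factor $\rho_\bfm=(4/s_0)^R\btil_R(\bfm)$ versus its reciprocal is precisely what drives the iteration in \S 7 (see (\ref{7.5})--(\ref{7.7})), and with the reciprocal the argument collapses. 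The rest of your bookkeeping (absorbing $M^{-12\cdot3^Rb}$, the bounds on $k_\bfm$ and $h$, and $a_R=b_{R-1}\le b_R/(1+\frac23\Del)$) is essentially the paper's.
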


\begin{proof} We deduce from the postulated bound (\ref{6.6}) and Lemma \ref{lemma5.3} that there 
exists a choice of the tuple $\bfh=\bfh(\bfm)$, with $0\le h_n(\bfm)\le 15\cdot 3^Rb$ $(1\le n\le R)$, 
such that
$$X^\Lam M^{\Lam \psi}\ll X^{(c+1)\del}M^{-\gam}(X/M^b)^\Lam \prod_{\bfm\in \{0,1\}^R}
\Tet_R(\bfm;\bfh)^{\phi_{m_1}\ldots \phi_{m_R}}.$$
Consequently, one has
$$\prod_{\bfm \in \{0,1\}^R}\Tet_R(\bfm;\bfh)^{\phi_{m_1}\ldots \phi_{m_R}}\gg X^{-(c+1)\del}
M^{\Lam (\psi+b)+\gam}.$$
Note that $\phi_0+\phi_1=\tfrac{1}{2}$, so that
$$\sum_{\bfm\in \{0,1\}^R}\phi_{m_1}\ldots \phi_{m_R}=\left( \tfrac{1}{2}\right)^R\le \tfrac{1}{2}.
$$
Then we deduce from the definition (\ref{5.6}) of $\Tet_n(\bfm;\bfh)$ that
\begin{equation}\label{6.8}
\prod_{\bfm\in \{0,1\}^R}\left( X^{-\Lam}\llbracket K_{a_R,b_R}^2(X)\rrbracket 
+M^{-12\cdot 3^R b}\right)^{\phi_{m_1}\ldots \phi_{m_R}}\gg X^{-(c+1)\del}
M^{\Lam (\psi+\frac{1}{2}b)+\gam}.
\end{equation}

\par In preparation for our application of Lemma \ref{lemma6.1}, we examine the exponents 
$\phi_{m_1}\ldots \phi_{m_R}$. Put
$$\bet_\bfm^{(n)}=\phi_{m_1}\ldots \phi_{m_n}\quad \text{and}\quad 
\gam_\bfm^{(n)}=\btil_n(\bfm)\quad (\bfm\in \{0,1\}^n).$$
In addition, we define
$$B_n=\sum_{\bfm\in \{0,1\}^n}\bet_\bfm^{(n)}\btil_n(\bfm)\quad \text{and}\quad 
A_n=\sum_{\bfm\in \{0,1\}^n}\bet_\bfm^{(n)}\atil_n(\bfm),$$
and then put $\Ome=B_R$. From the iterative formulae (\ref{6.2}) to (\ref{6.4}), we obtain
\begin{align*}
B_{n+1}=&\, \frac{1}{6}\sum_{\bfm\in \{0,1\}^n}3\btil_n(\bfm)\phi_{m_1}\ldots \phi_{m_n}\\
&\,+\frac{1}{3}\sum_{\bfm \in \{0,1\}^n}(2\btil_n(\bfm)-\atil_n(\bfm)+
\Del (\btil_n(\bfm)-\atil_n(\bfm))\phi_{m_1}\ldots \phi_{m_n},
\end{align*}
so that
\begin{align*}
B_{n+1}&=\tfrac{1}{2}B_n+(\tfrac{2}{3}+\tfrac{1}{3}\Del)B_n-(\tfrac{1}{3}+\tfrac{1}{3}\Del)A_n\\
=&\,(\tfrac{7}{6}+\tfrac{1}{3}\Del)B_n-(\tfrac{1}{3}+\tfrac{1}{3}\Del)A_n.
\end{align*}
Similarly, one finds that
$$A_{n+1}=\frac{1}{2}\sum_{\bfm\in \{0,1\}^n}\btil_n(\bfm)\phi_{m_1}\ldots \phi_{m_n}=
\tfrac{1}{2}B_n.$$
Thus we conclude via (\ref{2.3}) that
\begin{equation}\label{6.9}
4^2B_{n+2}=\gra (4B_{n+1})-\grb B_n\quad (n\ge 1).
\end{equation}
In addition, one has the initial data
\begin{align}
4B_1&=4\left( \tfrac{1}{6}(3\btil_0)+\tfrac{1}{3}(2\btil_0-\atil_0+\Del(\btil_0-\atil_0))\right) 
=\gra -\tfrac{1}{2}\grb/(1+\tfrac{2}{3}\Del),\label{6.10}\\
4A_1&=4(\tfrac{1}{2}\btil_0)=2,\notag
\end{align}
and hence
\begin{equation}\label{6.11}
4^2B_2=4^2\left( (\tfrac{7}{6}+\tfrac{1}{3}\Del)B_1-\tfrac{1}{3}(1+\Del)A_1\right)
=\gra (\gra-\tfrac{1}{2}\grb/(1+\tfrac{2}{3}\Del))-\grb.
\end{equation}

\par The recurrence formula (\ref{6.9}) has a solution of the shape
$$4^nB_n=\sig_+\tet_+^n+\sig_-\tet_-^n\quad (n\ge 1),$$
where, in view of (\ref{6.10}) and (\ref{6.11}), one has
$$\sig_+\tet_++\sig_-\tet_-=4B_1=\gra-\tfrac{1}{2}\grb/(1+\tfrac{2}{3}\Del)$$
and
$$\sig_+\tet_+^2+\sig_-\tet_-^2=4^2B_2=\gra(\gra-\tfrac{1}{2}\grb/(1+\tfrac{2}{3}\Del))-\grb .$$
Since $\gra=\tet_++\tet_-$ and $\grb=\tet_+\tet_-$, we therefore deduce that
$$4^nB_n=\frac{\tet_+^{n+1}-\tet_-^{n+1}}{\tet_+-\tet_-}-\frac{\tet_+\tet_-}{2(1+\tfrac{2}{3}\Del)}
\left( \frac{\tet_+^n-\tet_-^n}{\tet_+-\tet_-}\right) .$$
In particular, on recalling (\ref{6.1}), we find that $4^RB_R=s_0^R$, so that $B_R=(s_0/4)^R$. Also, 
therefore, it follows from (\ref{2.7}) that $B_R>1$.\par

Returning now to the application of Lemma \ref{lemma6.1}, we note first that $\Ome=B_R$, and hence 
(\ref{6.8}) yields the relation
$$\sum_{\bfm\in \{0,1\}^R}\left( X^{-\Lam}\llbracket K_{a_R,b_R}^2(X)\rrbracket 
+M^{-12\cdot 3^Rb}\right)^{B_R/\btil_R(\bfm)}\gg X^{-(c+1)\del}
M^{\Lam (\psi+\frac{1}{2}b)+\gam}.$$
But in view of (\ref{6.5}), one has $\btil_R(\bfm)/B_R=\rho_\bfm$, and thus we find that for some tuple 
$\bfm\in \{0,1\}^R$, one has
$$X^{-\Lam}\llbracket K_{a_R,b_R}^2(X)\rrbracket +M^{-12\cdot 3^Rb}\gg 
X^{-\rho_\bfm (c+1)\del}M^{\Lam \rho_\bfm (\psi+\frac{1}{2}b)+\rho_\bfm \gam},$$
whence
\begin{equation}\label{6.12}
X^{-\Lam}\llbracket K_{a_R,b_R}^2(X)\rrbracket +M^{-12\cdot 3^Rb}\gg 
X^{-c'\del}M^{\Lam \psi'+\gam'}.
\end{equation}

\par We next remove the term $M^{-12\cdot 3^Rb}$ on the left hand side of (\ref{6.12}). We observe 
that the relations (\ref{6.4}) ensure that $\btil_R(\bfm)\le 3^R$, and hence (\ref{2.7}) and (\ref{6.5}) 
together reveal that $\rho_\bfm\le \btil_R(\bfm)\le 3^R$. By hypothesis, we have $X^{c\del}<M^{1/2}$, 
whence $X^{c'\del}\ll M^{3^R}$. Thus we deduce from (\ref{2.8}) that
$$X^{-c'\del}M^{\Lam \psi'+\gam'}\ge M^{-3^R+\rho_\bfm \gam}\ge M^{-3^R-4\cdot 3^R b}.$$
Since
$$M^{-12\cdot 3^Rb}<M^{-3^R-8\cdot 3^Rb},$$
it follows from (\ref{6.12}) that
\begin{equation}\label{6.13}
X^{-\Lam}\llbracket K_{a_R,b_R}^2(X)\rrbracket \gg X^{-c'\del}M^{\Lam \psi'+\gam'}.
\end{equation} 

\par Our final task consists of extracting appropriate constraints on the parameters $a_R$ and $b_R$. 
Here, a comparison of (\ref{5.3}) to (\ref{5.5}) with (\ref{6.2}) to (\ref{6.4}) reveals that we may follow 
the argument leading from \cite[equation (8.16)]{Woo2014b} to the conclusion of the proof of 
\cite[Lemma 8.2]{Woo2014b}, but substituting $1+\tfrac{2}{3}\Del$ in place of $\sqrt{k}$ throughout. 
The reader should experience little difficulty in adapting the argument given therein to show that
$$k_\bfm b\le b_R\le k_\bfm b+16\cdot 3^{2R}b,$$
and further that
$$a_R=b_{R-1}<b_R/(1+\tfrac{2}{3}\Del).$$
Moreover, one may also verify that $(1+\frac{2}{3}\Del)^R\le k_\bfm\le 3^R$, just as in the conclusion 
of the proof of \cite[Lemma 8.2]{Woo2014b}. The estimate (\ref{6.7}), with all associated conditions, 
therefore follows from (\ref{6.13}) on taking $a'=a_R$ and $b'=b_R$. This completes our account of the 
proof of the lemma.
\end{proof}

\section{The iterative process}
We begin with a crude estimate of use at the conclusion of our argument.

\begin{lemma}\label{lemma7.1}
Suppose that $a$ and $b$ are integers with $0\le a<b\le (2\tet)^{-1}$. Then provided that $\Lam\ge 0$, 
one has
$$\llbracket K_{a,b}^2(X)\rrbracket \ll X^{\Lam+\del}.$$
\end{lemma}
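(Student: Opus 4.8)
The plan is to prove Lemma~\ref{lemma7.1} by bounding the mean value $K_{a,b}^2(X)$ in a completely elementary fashion, using only positivity, H\"older's inequality, and the trivial fact that $J_s(X)$ for small $s$ is controlled by $J(X)=J_6(X)$. Recall from the definitions in \S2 that
$$K_{a,b}^2(X)=\max_{1\le \xi\le p^a}\max_{\substack{1\le \eta\le p^b\\ \eta\not\equiv \xi\mmod{p}}}
\oint |\grF_a^2(\bfalp;\xi)^2\grF_b^2(\bfalp;\eta)^2\grf_b(\bfalp;\eta)^{4}|\d\bfalp ,$$
so the integrand is a product of twelve factors of the shape $\grf_c(\bfalp;\cdot)$ in absolute value. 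First I would discard the congruence conditions defining the sets $\Xi_a^2(\xi)$ and $\Xi_b^2(\eta)$: by the triangle inequality, $|\grF_a^2(\bfalp;\xi)|\le \sum_{\bfxi}|\grf_{a+1}(\bfalp;\xi_1)\grf_{a+1}(\bfalp;\xi_2)|$, where the outer sum has at most $p^2\ll M^2$ terms, and similarly for $\grF_b^2$. Thus, after applying H\"older's inequality to the resulting sum of integrals, the problem reduces to estimating, up to an acceptable power of $M$, integrals of the form $\oint |\grf_c(\bfalp;\cdot)|^{2u}|\grf_d(\bfalp;\cdot)|^{2v}$ with $u+v=6$ and $c,d\in\{a+1,b\}$, each of which is controlled by Lemma~\ref{lemma2.1}.

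Next I would invoke Lemma~\ref{lemma2.1}, whose hypothesis $c,d\le \tet^{-1}$ is met since $b\le(2\tet)^{-1}$ and $a+1\le b$, to obtain a bound of the shape $(J(X/M^{a+1}))^{u/6}(J(X/M^b))^{v/6}$ for each such piece. Since $\Lam\ge0$, the bound in \eqref{2.13} gives $J(Y)\ll Y^{6+\Del}(Y/\,\cdot\,)^{\cdots}$; more usefully, writing everything in terms of the normalised quantity $\llbracket\,\cdot\,\rrbracket$ and using $\llbracket J(Y)\rrbracket\ll Y^{\Lam+\del}\le X^{\Lam+\del}$ for $Y\le X$, one sees that $J(X/M^c)\ll (X/M^c)^{6+\Del}X^{\Lam+\del}$. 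Assembling the twelve factors and the $O(M^{O(1)})$ losses incurred from stripping the $\Xi$-sums, one finds that $K_{a,b}^2(X)$ is bounded by $X^{\Lam+\del}$ times $(X/M^a)^{2+\Del}(X/M^b)^{4}$ up to a power of $M$ that — crucially — is nonnegative in the exponent of $X$ and hence absorbed into $X^{\del}$; recalling the definition \eqref{2.12} of $\llbracket K_{a,b}^2(X)\rrbracket$, this is exactly the claimed inequality $\llbracket K_{a,b}^2(X)\rrbracket\ll X^{\Lam+\del}$.

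The only genuine bookkeeping obstacle is to check that the various powers of $M$ collected along the way — the $M^{O(1)}$ from expanding $\grF_a^2$ and $\grF_b^2$, and the discrepancy between the weights $(a+1)$ and $a$ in the exponents — contribute an amount that is at most $X^{\del}$ after normalisation; here one uses $M=X^\tet$ together with the generous separation of scales built into \eqref{2.8} (so that any fixed power of $M$ is $\le X^{\del}$ once $N$ is large), and the fact that replacing $M^{a+1}$ by $M^a$ only changes things by the harmless factor $M^{O(1)}$. Since this is precisely the kind of "crude estimate" the lemma advertises itself to be, I would present the argument briefly, perhaps simply citing Lemma~\ref{lemma2.1} in combination with \eqref{2.13} and noting that the power of $M$ arising is negligible; no delicate congruencing input is needed at all.
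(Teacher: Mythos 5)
Your overall plan (reduce to Lemma \ref{lemma2.1} and the bound $J(Y)<Y^{\lam+\del}$ from (\ref{2.13})) is the right one, but the step in which you strip off the sums over $\Xi_a^2(\xi)$ and $\Xi_b^2(\eta)$ by the triangle inequality contains a fatal piece of bookkeeping. Expanding $|\grF_a^2(\bfalp;\xi)|^2$ via the triangle inequality and Cauchy--Schwarz costs a factor of at least $\mathrm{card}(\Xi_a^2(\xi))\ll p^2\ll M^2$, and similarly for $\grF_b^2(\bfalp;\eta)$, so your argument only yields $\llbracket K_{a,b}^2(X)\rrbracket \ll M^{C}X^{\Lam+\del}$ for some absolute constant $C>0$. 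You claim this loss is absorbed because ``any fixed power of $M$ is $\le X^{\del}$ once $N$ is large'', but the inequality runs the other way: from (\ref{2.8}) one has $\del=(10N)^{-12RN}\tet$, so $X^\del=M^{\del/\tet}<M^{1/N}$ (as is noted explicitly in \S2). Thus $X^\del$ is a \emph{tiny} power of $M$, and a fixed positive power such as $M^C$ is vastly larger than $X^\del$; the only slack available in the final normalisation is $M^{\frac{2}{3}\Del(a-b)}$, which cannot absorb $M^C$ when $b-a$ is small.

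The loss must be avoided entirely, and the way to do so is to pass to the underlying Diophantine system rather than manipulating the exponential sums. The integral defining $K_{a,b}^2(X)$ counts solutions of (\ref{1.1}) in which four of the variables lie in prescribed residue classes modulo $p^{a+1}$ (distinct, all contained in the class $\xi\bmod p^a$), four lie in classes modulo $p^{b+1}$ contained in $\eta\bmod p^b$, and four satisfy $x\equiv\eta\pmod{p^b}$, summed over the admissible tuples from $\Xi_a^2(\xi)$ and $\Xi_b^2(\eta)$. Since these classes are disjoint as the tuples vary, positivity shows that this count is at most the number of solutions in which the four ``$a$-variables'' merely satisfy $x\equiv\xi\pmod{p^a}$ and the eight ``$b$-variables'' satisfy $x\equiv\eta\pmod{p^b}$, that is,
$$K_{a,b}^2(X)\le \oint |\grf_a(\bfalp;\xi)^4\grf_b(\bfalp;\eta)^8|\d\bfalp \ll (J(X/M^a))^{1/3}(J(X/M^b))^{2/3},$$
with no power of $M$ lost, the final step being Lemma \ref{lemma2.1} with $(u,v)=(2,4)$. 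From here your computation with (\ref{2.13}) and $\Lam\ge 0$ goes through and yields $\llbracket K_{a,b}^2(X)\rrbracket\ll X^{\Lam+\del}M^{\frac{2}{3}\Del(a-b)}\ll X^{\Lam+\del}$, which is the paper's argument.
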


\begin{proof} On considering the underlying Diophantine equations, we deduce from Lemma 
\ref{lemma2.1} that
$$K_{a,b}^2(X)\ll (J(X/M^a))^{1/3}(J(X/M^b))^{2/3},$$
whence
\begin{align*}
\llbracket K_{a,b}^2(X)\rrbracket &\ll 
\frac{X^\del \left( (X/M^a)^{1/3}(X/M^b)^{2/3}\right)^{6+\Del+\Lam}}
{(X/M^a)^{2+\Del}(X/M^b)^4}\\
&\ll X^{\Lam+\del}M^{\frac{2}{3}\Del (a-b)}\ll X^{\Lam+\del}.
\end{align*}
This completes the proof of the lemma.
\end{proof}

We now come to the crescendo of our argument.

\begin{theorem}\label{theorem7.2}
Suppose that $\Del$ is a positive number with $\Del<\tfrac{1}{12}$. Then for each $\eps>0$, one has 
$J(X)\ll X^{6+\Del+\eps}$.
\end{theorem}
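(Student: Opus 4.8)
The plan is to run the monograde iteration of Lemma~\ref{lemma6.2} starting from the inequality delivered by Lemma~\ref{lemma4.2}, and to derive a contradiction unless $\Lam\le 0$, which gives $\lam\le 6+\Del$ and hence the theorem. First I would suppose, for contradiction, that $\Lam>0$ (if $\Lam\le 0$ the conclusion is immediate from the definition of $\lam$ together with the remarks after (\ref{2.6})). The starting point is Lemma~\ref{lemma4.2}, which supplies a prime $p$ (now fixed) and an integer $h_0$ with $0\le h_0\le 4B$ such that $J(X)\ll M^{8B+8h_0/3}K_{0,B+h_0}^2(X)$. Translating this into the normalised notation (\ref{2.10})--(\ref{2.12}) and recalling the lower bound $\llbracket J(X)\rrbracket >X^{\Lam-\del}$ from (\ref{2.13}), one obtains an inequality of the exact shape (\ref{6.6}) of Lemma~\ref{lemma6.2}, namely $X^\Lam M^{\Lam\psi_0}\ll X^{c_0\del}M^{-\gam_0}\llbracket K_{a_0,b_0}^2(X)\rrbracket$ for suitable initial data: roughly $a_0=0$, $b_0=B+h_0$, $\psi_0=0$, $c_0$ a bounded constant, and $\gam_0$ bounded in terms of $B$ (in particular $\gam_0\ge -4b_0$).

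Next I would iterate Lemma~\ref{lemma6.2}. Each application produces a new tuple $\bfm\in\{0,1\}^R$ and new parameters with $\psi_{j+1}=\rho_{\bfm}(\psi_j+\tfrac12 b_j)$, $c_{j+1}=\rho_{\bfm}(c_j+1)$, $\gam_{j+1}=\rho_{\bfm}\gam_j$, and $b_{j+1}=k_{\bfm}b_j+h$ with $0\le h\le 16\cdot3^{2R}b_j$ and $k_{\bfm}\ge(1+\tfrac23\Del)^R$, hence $b_{j+1}\asymp k_{\bfm}b_j$ grows geometrically. The crucial arithmetic fact, secured by the choice (\ref{2.7}) of $R$ and the definition (\ref{6.1}) of $s_0$, is that $\rho_{\bfm}=\btil_R(\bfm)(4/s_0)^R$ with $s_0>4$, so $\rho_{\bfm}<k_{\bfm}$; crucially one wants $\rho_{\bfm}$ to dominate the growth of $b_j$ in the sense that the accumulated factor $\psi_j$, which carries the weight $\tfrac12 b_j$ at each stage, blows up faster than $b_j$ itself. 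More precisely, after $J$ iterations $\psi_J$ is a sum of terms each of which is a product of $\rho$'s times some $\tfrac12 b_i$, and since $b_i\le\prod k_{\bfm}\cdot b_0$ while the outermost product of $\rho$'s beats $\prod k_{\bfm}$, one finds $\psi_J/b_J\to\infty$. At the same time $c_J\del$ stays controlled: $c_J\le\prod\rho_{\bfm}\cdot(\text{const})$, and because $\del$ was chosen in (\ref{2.8}) astronomically small relative to $\tet$ (hence relative to everything built from $R$ and $N$), one keeps $c_J\del<\tfrac12$ throughout, so the hypothesis $c\le(2\del)^{-1}\tet$ of Lemma~\ref{lemma6.2} remains valid and the term $X^{c_J\del}$ is harmless. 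One runs the iteration until $b_J$ first exceeds roughly $(20\cdot3^{2R}R\tet)^{-1}$ — the largest scale at which Lemma~\ref{lemma6.2} still applies — which happens after a bounded number $J$ of steps (bounded in terms of $N$ and $R$).

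At that terminal stage I would apply the crude bound of Lemma~\ref{lemma7.1}, $\llbracket K_{a_J,b_J}^2(X)\rrbracket\ll X^{\Lam+\del}$, valid since $b_J\le(2\tet)^{-1}$ and $\Lam\ge0$. Substituting into (\ref{6.7}) gives $X^\Lam M^{\Lam\psi_J}\ll X^{c_J\del}M^{-\gam_J}X^{\Lam+\del}$, i.e. $M^{\Lam\psi_J+\gam_J}\ll X^{(c_J+1)\del}$. Recalling $M=X^\tet$ and $\gam_J=\prod\rho_{\bfm}\cdot\gam_0$ which is negligible compared with $\Lam\psi_J$ once $\psi_J$ is large (here one uses $\Lam>0$ and $\psi_J\gg |\gam_J|$, which follows from the same comparison of growth rates together with $\psi_0=0$, $\gam_0$ bounded), this forces $\Lam\psi_J\tet\le(c_J+1)\del+o(1)$. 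But $(c_J+1)\del<1$ and $\psi_J$ can be made as large as we please — in particular $\psi_J\tet>2/\Lam$ once $N$ (hence the number of available iterations, or simply the scale separation) is large enough — giving $\Lam\le$ something tending to $0$, a contradiction with $\Lam>0$. Hence $\Lam\le0$, so $\lam\le6+\Del$, and as explained after (\ref{2.6}) this yields $J(X)\ll X^{6+\Del+\eps}$.

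The step I expect to be the main obstacle is the bookkeeping in the iteration: showing cleanly that $\psi_J$ grows strictly faster than $b_J$ (equivalently, that the product of the $\rho_{\bfm}$'s outpaces the product of the $k_{\bfm}$'s along \emph{every} branch), while simultaneously verifying that $c_J\del$ stays below $\tfrac12$ and that the ``error'' exponent $\gam_J$ never dominates $\psi_J$. All three are governed by the single inequality $s_0>4$ from (\ref{2.7}), but disentangling the multigrade sum over $\bfm\in\{0,1\}^R$ at each stage — rather than a single recursion — is where the care lies; this is precisely the point where the argument parallels, and must adapt, the reasoning of \cite[\S9]{Woo2014b}.
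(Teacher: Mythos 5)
Your overall architecture matches the paper's: start from Lemma \ref{lemma4.2}, iterate Lemma \ref{lemma6.2}, terminate with the crude bound of Lemma \ref{lemma7.1}, and deduce $\Lam\le o(1)$. But the quantitative mechanism you propose for the final contradiction is wrong in two related places. First, since $s_0>4$ by (\ref{2.7}), one has $\rho_\bfm=k_\bfm(4/s_0)^R<k_\bfm$, so the product of the $\rho$'s is \emph{smaller} than the product of the $k$'s, not larger as you assert; consequently $\psi_n/b_n$ does not tend to infinity. Indeed, writing $q=(4/s_0)^R<1$, the recursions (\ref{7.4}) and (\ref{7.6}) give $\psi_{n+1}/b_{n+1}\le q\bigl(\psi_n/b_n+\tfrac12\bigr)$, whence $\psi_n/b_n\le q/(2(1-q))$ for every $n$: the ratio is bounded independently of the number of iterations. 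Second, and fatally for your endgame, since $\psi_J\ll b_J$ and Lemma \ref{lemma6.2} requires $b_J\le(20\cdot 3^{2R}R\tet)^{-1}$, one has $\psi_J\tet\ll(20\cdot 3^{2R}R)^{-1}<1$ throughout the admissible range; the inequality $\psi_J\tet>2/\Lam$ that you need can never be achieved, however large $N$ is taken.

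The correct mechanism is a \emph{ratio} comparison between $\psi_N$ and the error exponent $1+|\gam_N|$, not an absolute lower bound on $\psi_N\tet$. Setting $Q=(4/s_0)^{RN}\kap_0\cdots\kap_{N-1}B$, one has $|\gam_N|\le 4Q$, while $\psi_N\ge\tfrac12NQ$: you correctly note that $\psi_N$ is a sum of $N$ terms, but the point is that each term is at least $\tfrac12 Q$ (because $\tfrac12 b_n\ge\tfrac12\kap_0\cdots\kap_{n-1}B$ and $(4/s_0)^R\le 1$), so the gain is the \emph{count} $N$ of iterations, not a divergent growth rate. The terminal inequality $M^{\Lam\psi_N+\gam_N}\ll X^{(c_N+1)\del}\le M$ then yields $\Lam\psi_N\le 1+4Q$, and the role of the bound $s_0<4+2\Del$ (together with $k_\bfm\ge(1+\tfrac23\Del)^R$) is precisely to force $\rho_\bfm\ge 1$ and hence $Q\ge 1$, so that the additive $1$ is absorbed and $\Lam\le 18/N$. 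A further, more minor, defect is your stopping rule: running until $b_J$ first exceeds $(20\cdot 3^{2R}R\tet)^{-1}$ could take on the order of $N\log N/\log(1+\tfrac23\Del)$ steps, by which point $c_J\le 3^{RJ+1}$ may exceed $(2\del)^{-1}\tet$ and Lemma \ref{lemma6.2} no longer applies; the paper instead runs exactly $N$ steps, with $\tet$ and $\del$ calibrated in (\ref{2.8}) so that every hypothesis survives that long.
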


\begin{proof} We prove that $\Lam\le 0$, for then the conclusion of the lemma follows at once from 
(\ref{2.13}). Assume then that $\Lam\ge 0$, for otherwise there is nothing to prove. We begin by noting 
that as a consequence of Lemma \ref{lemma4.2}, one finds from (\ref{2.10}) and (\ref{2.12}) that there 
exists an integer $h_{-1}$ with $0\le h_{-1}\le 4B$ such that
$$\llbracket J(X)\rrbracket \ll M^{4B-4h_{-1}/3}\llbracket K_{0,B+h_{-1}}^2(X)\rrbracket .$$
We therefore deduce from (\ref{2.13}) that
\begin{equation}\label{7.1}
X^\Lam \ll X^\del \llbracket J(X)\rrbracket \ll X^\del M^{4B-4h_{-1}/3}\llbracket 
K_{0,B+h_{-1}}^2(X)\rrbracket .
\end{equation}

\par Next we define sequences $(\kap_n)$, $(h_n)$, $(a_n)$, $(b_n)$, $(c_n)$, $(\psi_n)$ and 
$(\gam_n)$, for $0\le n\le N$, in such a way that
\begin{equation}
(1+\tfrac{2}{3}\Del)^R\le \kap_{n-1}\le 3^R,\quad 0\le h_{n-1}\le 16\cdot 3^{2R}b_{n-1}\quad 
(n\ge 1),\label{7.2}
\end{equation}
and
\begin{equation}\label{7.3}
X^\Lam M^{\Lam \psi_n}\ll X^{c_n\del}M^{-\gam_n}\llbracket K_{a_n,b_n}^2(X)\rrbracket .
\end{equation}
We note here that the sequences $(a_n)$ and $(b_n)$ are not directly related to our earlier use of these 
letters. Given a fixed choice for the sequences $(a_n)$, $(\kap_n)$ and $(h_n)$, the remaining sequences 
are defined by means of the relations
\begin{align}
b_{n+1}&=\kap_n b_n+h_n,\label{7.4}\\
c_{n+1}&=(4/s_0)^R\kap_n(c_n+1),\label{7.5}\\
\psi_{n+1}&=(4/s_0)^R\kap_n(\psi_n+\tfrac{1}{2}b_n),\label{7.6}\\
\gam_{n+1}&=(4/s_0)^R\kap_n\gam_n.\label{7.7}
\end{align}
We put
\begin{align*}
\kap_{-1}=3^R,\quad b_{-1}=1,&\quad a_0=0,\quad b_0=B+h_{-1}\\
\psi_0=0,\quad c_0=1,\quad &\gam_0=\tfrac{4}{3}h_{-1}-4B,
\end{align*}
so that both (\ref{7.2}) and (\ref{7.3}) hold with $n=0$ as a consequence of our initial choice of 
$\kap_{-1}$ and $b_{-1}$, together with (\ref{7.1}). We prove by induction that for each non-negative 
integer $n$ with $n<N$, the sequences $(a_m)_{m=0}^n$, $(\kap_m)_{m=0}^n$ and 
$(h_m)_{m=-1}^n$ may be chosen in such a way that
\begin{equation}\label{7.8}
1\le b_n\le \left( 20\cdot 3^{2R}R\tet\right)^{-1},\quad \psi_n\ge 0,\quad \gam_n\ge -4b_n,\quad 
0\le c_n\le (2\del)^{-1}\tet ,
\end{equation}
\begin{equation}\label{7.9}
0\le a_n\le b_n/(1+\tfrac{2}{3}\Del),
\end{equation}
and so that (\ref{7.2}) and (\ref{7.3}) both hold with $n$ replaced by $n+1$.\par

Let $0\le n<N$, and suppose that (\ref{7.2}) and (\ref{7.3}) both hold for the index $n$. We have already 
shown such to be the case for $n=0$. We observe first that from (\ref{7.2}) and (\ref{7.4}), we find that 
$b_n\le 4(17\cdot 3^{2R})^nB$, whence by invoking (\ref{2.8}), we find that for $0\le n\le N$, one has 
$b_n\le (20\cdot 3^{2R}R\tet)^{-1}$. It is apparent from (\ref{7.5}) and (\ref{7.6}) that $c_n$ and 
$\psi_n$ are non-negative for all $n$. Observe also that since $s_0\ge 4$ and $\kap_m\le 3^R$, then by 
iterating (\ref{7.5}) we obtain the bound
\begin{equation}\label{7.10}
c_n\le 3^{Rn}+3^R\left( \frac{3^{Rn}-1}{3^R-1}\right) \le 3^{Rn+1},
\end{equation}
and by reference to (\ref{2.8}), we discern that $c_n\le (2\del)^{-1}\tet$ for $0\le n<N$.\par

In order to bound $\gam_n$, we recall that $s_0\ge 4$ and iterate the relation (\ref{7.7}) to deduce that
\begin{equation}\label{7.11}
\gam_m=(4/s_0)^{Rm}\kap_0\ldots \kap_{m-1}\gam_0\ge-4(4/s_0)^{Rm}\kap_0\ldots \kap_{m-1}B.
\end{equation}
In addition, we find from (\ref{7.4}) that for $m\ge 0$ one has $b_{m+1}\ge \kap_mb_m$, so that an 
inductive argument yields the lower bound
\begin{equation}\label{7.12}
b_m\ge \kap_0\ldots \kap_{m-1}b_0\ge \kap_0\ldots \kap_{m-1}B.
\end{equation}
Hence we deduce from (\ref{7.11}) that $\gam_m\ge -4(4/s_0)^{Rm}b_m>-4b_m$. Assembling this 
conclusion together with those of the previous paragraph, we have shown that (\ref{7.8}) holds for 
$0\le n\le N$.\par

At this point in the argument, we may suppose that (\ref{7.3}), (\ref{7.8}) and (\ref{7.9}) hold for the 
index $n$. An application of Lemma \ref{lemma6.2} therefore reveals that there exist numbers $\kap_n$, 
$h_n$ and $a_n$ satisfying the constraints implied by (\ref{7.2}) with $n$ replaced by $n+1$, for which 
the upper bound (\ref{7.3}) holds for some $a_n$ with $0\le a_n\le b_n/(1+\frac{2}{3}\Del)$, also with 
$n$ replaced by $n+1$. This completes the inductive step, so that in particular (\ref{7.3}) holds for 
$0\le n\le N$.\par

We now exploit the bound just established. Since we have the upper bound 
$b_N\le 4(17\cdot 3^{2R})^N\le (2\tet)^{-1}$, 
it is a consequence of Lemma \ref{lemma7.1} that
$$\llbracket K_{a_N,b_N}^2(X)\rrbracket \ll X^{\Lam+\del}.$$
By combining this with (\ref{7.3}) and (\ref{7.11}), we obtain the bound
\begin{equation}\label{7.13}
X^\Lam M^{\Lam \psi_N}\ll X^{\Lam +(c_N+1)\del}M^{4\kap_0\ldots \kap_{N-1}B(4/s_0)^{RN}}.
\end{equation}
Meanwhile, an application of (\ref{7.10}) in combination with (\ref{2.8}) shows that 
$X^{(c_N+1)\del}<M$. We therefore deduce from (\ref{7.13}) that
$$\Lam \psi_N\le 4(4/s_0)^{RN}\kap_0\ldots \kap_{N-1}B+1.$$
On recalling (\ref{2.5}) and (\ref{6.1}), we see that
$$s_0^R\le \frac{\tet_+^{R+1}}{\tet_+-\tet_-}<
\frac{(4+\tfrac{2}{3}\Del)\tet_+^R}{(4+\tfrac{2}{3}\Del)-(\tfrac{2}{3}+\tfrac{2}{3}\Del)}
<\tfrac{4}{3}\tet_+^R.$$
Thus, since $R$ is sufficiently large, one finds that $s_0<4+2\Del$. Notice here that 
$\kap_n\ge (1+\frac{2}{3}\Del)^R$ and
$$4/s_0\ge 4/(4+2\Del)=1/(1+\tfrac{1}{2}\Del).$$
Hence we deduce that
$$4(4/s_0)^{RN}\kap_0\ldots \kap_{N-1}B\ge 4
\left( \frac{1+\tfrac{2}{3}\Del}{1+\tfrac{1}{2}\Del}\right)^{RN}B\ge 1,$$
so that
\begin{equation}\label{7.14}
\Lam \psi_N\le 9(4/s_0)^{RN}\kap_0\ldots \kap_{N-1}B.
\end{equation}

\par A further application of the lower bound $b_n\ge \kap_0\ldots \kap_{n-1}B$, available from 
(\ref{7.12}), leads from (\ref{7.6}) and the bound $s_0\ge 4$ to the relation
\begin{align*}
\psi_{n+1}&=(4/s_0)^R(\kap_n\psi_n+\tfrac{1}{2}\kap_nb_n)\\
&\ge (4/s_0)^R\kap_n\psi_n+\tfrac{1}{2}(4/s_0)^R\kap_0\ldots \kap_nB\\
&\ge (4/s_0)^R\kap_n\psi_n+\tfrac{1}{2}(4/s_0)^{R(n+1)}\kap_0\ldots \kap_nB.
\end{align*}
An inductive argument therefore delivers the lower bound
$$\psi_N\ge \tfrac{1}{2}N(4/s_0)^{RN}\kap_0\ldots \kap_{N-1}B.$$
Thus we deduce from (\ref{7.14}) that
$$\Lam \le 
\frac{9(4/s_0)^{RN}\kap_0\ldots \kap_{N-1}B}{\tfrac{1}{2}N(4/s_0)^{RN}\kap_0\ldots \kap_{N-1}B}
=\frac{18}{N}.$$
Since we are at liberty to take $N$ as large as we please in terms of $\Del$, we are forced to conclude that 
$\Lam\le 0$. In view of our opening discussion, this completes the proof of the theorem.
\end{proof}

\begin{corollary}\label{corollary7.3}
For each $\eps>0$, one has $J(X)\ll X^{6+\eps}$.
\end{corollary}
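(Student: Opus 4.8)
The plan is to derive Corollary \ref{corollary7.3} from Theorem \ref{theorem7.2} by a simple limiting argument that trades on the fact that $\lam$ is defined with no reference to the parameter $\Del$. First I would observe that it suffices to show $\lam\le 6$: indeed, by the definition of $\lam$ as a limit supremum, for each $\eps>0$ and each sufficiently large $X$ one has $J(X)\ll X^{\lam+\eps}$, so once $\lam\le 6$ is known this already yields $J(X)\ll X^{6+\eps}$.

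Next I would fix an arbitrary $\Del$ with $0<\Del<\tfrac{1}{12}$ and appeal to the conclusion of Theorem \ref{theorem7.2}, whose proof in fact establishes the sharper statement that $\Lam=\lam-(6+\Del)$ satisfies $\Lam\le 0$. Indeed, the argument there shows $\Lam\le 18/N$ for every admissible $N$, and $N$ may be taken as large as one pleases in terms of $\Del$, so $\Lam\le 0$; equivalently, $\lam\le 6+\Del$. Since this inequality holds for every positive $\Del<\tfrac{1}{12}$, and the left-hand side does not depend on $\Del$, letting $\Del\to 0^+$ forces $\lam\le 6$.

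There is no genuine obstacle here; the only subtlety worth flagging is that one cannot simply set $\Del=0$ directly in Theorem \ref{theorem7.2}, since the auxiliary quantities $R$, $N$, $B$, $M$, $\tet$ and $\del$, and hence the implied constants, all depend on $\Del$ and degenerate as $\Del\to 0$. Routing the argument through the $\Del$-free invariant $\lam$ circumvents this entirely. With $\lam\le 6$ in hand, the bound $J(X)\ll X^{6+\eps}$ for each $\eps>0$ is immediate, which completes the proof of the corollary and, in view of the reductions in \S2, of Theorem \ref{theorem1.1} as well.
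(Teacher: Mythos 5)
Your argument is correct and is essentially the paper's: both simply exploit that $\Del$ may be taken arbitrarily small, the paper by setting $\Del=\tfrac{1}{2}\eps$ and applying Theorem \ref{theorem7.2} with $\eps'=\tfrac{1}{2}\eps$, and you by noting that the proof gives $\lam\le 6+\Del$ for every admissible $\Del$ and hence $\lam\le 6$. The detour through the $\Del$-independent quantity $\lam$ is a cosmetic difference only.
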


\begin{proof} We apply Theorem \ref{theorem7.2} with $\Del=\tfrac{1}{2}\eps$. Then for each 
$\eps'>0$, one has
$$J(X)\ll X^{6+\frac{1}{2}\eps+\eps'},$$
and the desired conclusion follows by taking $\eps'=\tfrac{1}{2}\eps$.
\end{proof}

As we discussed following (\ref{2.6}) above, the conclusion of Corollary \ref{corollary7.3} establishes the 
main conjecture in full for $J_{s,3}(X)$, and thus the proof of Theorem \ref{theorem1.1} is complete.

\section{Applications} We take the opportunity to report on some immediate applications of Theorem 
\ref{theorem1.1}, with brief notes on the necessary arguments. In all cases, the methods of proof are 
standard for those with a passing familiarity with the area, the hard work having been accomplished with 
the proof of Theorem \ref{theorem1.1}.\par

We begin by discussing the anticipated asymptotic formula for $J_s(X)$. Define the singular series
$$\grS_s=\sum_{q=1}^\infty \underset{(q,a_1,a_2,a_3)=1}
{\sum_{a_1=1}^q\sum_{a_2=1}^q\sum_{a_3=1}^q}
\Bigl| q^{-1}\sum_{r=1}^qe((a_1r+a_2r^2+a_3r^3)/q)\Bigr|^{2s},$$
and the singular integral
$$\grJ_s=\int_{\dbR^3}\Bigl| \int_0^1 e(\bet_1\gam+\bet_2\gam^2+\bet_3\gam^3)\d\gam 
\Bigr|^{2s}\d\bfbet .$$

\begin{theorem}\label{theorem8.1}
When $s\ge 7$, one has $J_s(X)\sim \grS_s\grJ_sX^{2s-6}$.
\end{theorem}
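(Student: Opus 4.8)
The plan is to derive the asymptotic formula from the upper bound $J_{s,3}(X)\ll X^{2s-6+\eps}$ (valid for $s\ge 7$ by Theorem \ref{theorem1.1}) via the Hardy--Littlewood circle method, exactly along the lines of the standard treatment for Vinogradov systems once one has a sharp mean value at the critical exponent. First I would record the generating function $f(\bfalp;X)=\sum_{1\le x\le X}e(\alp_1x+\alp_2x^2+\alp_3x^3)$ from \eqref{2.1}, so that $J_s(X)=\oint|f(\bfalp;X)|^{2s}\d\bfalp$, and fix a real parameter $Q$ with $Q=X^{1/2}$ or so. I would define the major arcs $\grM$ to be the union of the boxes of $\bfalp$ within $qX^{-j}\cdot(\text{small})$ of a rational point $\bfa/q$ with $1\le a_j\le q$, $(q,a_1,a_2,a_3)=1$ and $q\le Q$, and let $\grm=[0,1)^3\setminus\grM$ be the minor arcs. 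On $\grM$, Weyl-type pruning together with standard estimates for complete exponential sums and the underlying oscillatory integral give $f(\bfalp;X)\approx q^{-1}S(q,\bfa)v(\bfbet)X$, where $S(q,\bfa)=\sum_{r=1}^q e((a_1r+a_2r^2+a_3r^3)/q)$ and $v(\bfbet)=\int_0^1 e(\bet_1X\gam+\bet_2X^2\gam^2+\bet_3X^3\gam^3)\d\gam$ after rescaling; integrating the resulting main term over $\grM$ and completing the sum over $q$ and the integral over $\bfbet$ to all of $\dbR^3$ yields the predicted main term $\grS_s\grJ_sX^{2s-6}$, with the completion errors under control precisely because $s\ge 7$ guarantees absolute convergence of both $\grS_s$ and $\grJ_s$ (the singular integral converges for $2s>6$, i.e. $s\ge 4$, and the singular series for $2s>6$ as well by the standard local bounds, so $s\ge 7$ is comfortably safe).

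The crux is the minor-arc estimate. I would bound
\[
\oint_{\grm}|f(\bfalp;X)|^{2s}\d\bfalp\le\Bigl(\sup_{\bfalp\in\grm}|f(\bfalp;X)|\Bigr)^{2s-12}\oint|f(\bfalp;X)|^{12}\d\bfalp,
\]
using the trivial inclusion of $\grm$ in $[0,1)^3$ for the $L^{12}$ factor, which by Corollary \ref{corollary7.3} is $\ll X^{6+\eps}$. For the supremum I would invoke Weyl's inequality for the cubic exponential sum on the minor arcs: since $\bfalp\in\grm$ forces, by Dirichlet's theorem, a rational approximation to $\alp_3$ (or to a suitable linear form) with denominator in the range $(Q,X^3/Q]$, one gets $\sup_{\bfalp\in\grm}|f(\bfalp;X)|\ll X^{1-\sigma+\eps}$ for some $\sigma>0$ (one may take $\sigma=1/4$ from classical Weyl differencing applied to the leading cubic term, or use the sharper Weyl bound). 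Hence the minor-arc contribution is $\ll X^{(1-\sigma)(2s-12)+6+\eps}=X^{2s-6-\sigma(2s-12)+\eps}$, which for $s\ge 7$ gives a power saving $X^{2s-6-2\sigma+\eps}=o(X^{2s-6})$.

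The main obstacle will be organising the major-arc analysis so that the pruning errors and the tail errors from completing $\grS_s$ and $\grJ_s$ are all genuinely smaller than $X^{2s-6}$ by a fixed power; this is routine but requires care in the choice of $Q$ and in invoking the standard bounds $|q^{-1}S(q,\bfa)|\ll q^{-1/3+\eps}$ (uniformly in $\bfa$ coprime to $q$) and the decay $|v(\bfbet)|\ll\prod_j(1+X^j|\bet_j|)^{-1/3}$ of the rescaled oscillatory integral, together with a lemma guaranteeing that on the major arcs $f$ is well approximated by its expected value. All of these ingredients are available in the literature (for instance in the treatment of Vinogradov's system in \cite{Vau1997}), and no new input beyond Theorem \ref{theorem1.1} is needed, so I would cite them rather than reprove them. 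Assembling the major-arc main term with the minor-arc bound completes the proof.
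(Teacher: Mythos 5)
Your proposal is correct and follows essentially the same route as the paper: the paper simply feeds the critical mean value bound $\oint|f(\bfalp;X)|^{12}\d\bfalp\ll X^{6+\eps}$ into the standard major/minor arc argument of \cite[\S 9]{Woo2012a}, which is precisely the dissection you describe (minor arcs via H\"older against the twelfth moment plus a Weyl-type pointwise saving, major arcs yielding $\grS_s\grJ_s X^{2s-6}$). Your stated convergence thresholds for $\grS_s$ and $\grJ_s$ and the product-form decay of $v(\bfbet)$ are slightly off as written, but since you only need $s\ge 7$ and defer these standard facts to the literature, this does not affect the argument.
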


\begin{proof} On recalling (\ref{2.1}), it follows from orthogonality that the bound presented in 
Theorem \ref{theorem1.1} delivers the estimate
$$\oint |f(\bfalp;X)|^{12}\d\bfalp \ll X^{6+\eps},$$
we find that the argument of the proof of \cite[Theorem 1.2]{Woo2012a} detailed in 
\cite[\S9]{Woo2012a} applies without modification to establish the claimed asymptotic formula.
\end{proof}

We note that the elementary lower bound $J_s(X)\gg X^{2s-6}$ (see 
\cite[equation (7.4)]{Vau1997}), suffices to confirm that $\grS_s>0$ and $\grJ_s>0$, since one 
has also the estimates $\grS_s\ll 1$ and $\grJ_s\ll 1$ for $s\ge 7$.\par

For comparison, the methods of \cite[Chapter V]{Hua1947} and \cite[Chapter 7]{Vau1997} would 
combine to yield a conclusion analogous to Theorem \ref{theorem8.1}, but subject to the hypothesis 
$s\ge 9$. Our recent work \cite[Corollary 1.2]{Woo2014a} would permit this condition to be sharpened 
slightly to $s\ge 8$. Meanwhile, one may conjecture that for $1\le s\le 5$, one should have 
$J_s(X)\sim s!X^s$. Such is known for $1\le s\le 4$ (see especially \cite{VW1995}), but remains unproven 
for $s=5$. The remaining even moment would be expected to satisfy a different asymptotic formula. Here, 
the philosophy underlying \cite[Appendix]{VW1995} would suggest that $J_6(X)\sim CX^6$, 
with $C=6!+\grS_6\grJ_6$, this corresponding to a sum of the anticipated major arc contribution together 
with the solutions on linear spaces accounted for by the expected minor arc contribution. This seems 
presently to be far beyond our reach. Perhaps it is worth emphasising in this context that one has
$$0<\grS_6\ll 1\quad \text{and}\quad 0<\grJ_6\ll 1.$$
The second of these estimates is plain from the standard theory. For the first, one should use the 
quasi-multiplicative property of
$$\sum_{r=1}^qe((a_1r+a_2r^2+a_3r^3)/q)$$
in order to divide the problem into a consideration of the situation where $q$ is a prime $p$, or a prime 
power $p^h$ with $h\ge 2$. In the latter case, standard estimates (see the proof of 
\cite[Theorem 7.1]{Vau1997}) show that
$$\underset{(p^h,a_1,a_2,a_3)=1}
{\sum_{a_1=1}^{p^h}\sum_{a_2=1}^{p^h}\sum_{a_3=1}^{p^h}}\Bigl| 
p^{-h}\sum_{r=1}^{p^h}e((a_1r+a_2r^2+a_3r^3)/p^h)\Bigr|^{12}\ll p^{3h}(p^{-h/3})^{12}\ll 
p^{-h}.$$
Meanwhile, when $h=1$, one finds from \cite{Weil1948} that
$$p^{-1}\sum_{r=1}^pe((a_1r+a_2r^2+a_3r^3)/p)\ll p^{-1/2}(p,a_1,a_2,a_3)^{1/2},$$
whence
$$\underset{(p,a_1,a_2,a_3)=1}{\sum_{a_1=1}^p\sum_{a_2=1}^p\sum_{a_3=1}^p}
\Bigl| p^{-1}\sum_{r=1}^pe((a_1r+a_2r^2+a_3r^3)/p)\Bigr|^{12}\ll p^{-3}.$$
Thus we deduce that for a suitable fixed $A>0$ one has
$$\grS_6\ll \prod_p(1+Ap^{-2})\ll 1.$$

\par Finally, we consider a diagonal Diophantine system consisting of a cubic, quadratic and linear 
equation. When $s$ is a natural number, and $a_{ij}$ are integers for $1\le i\le 3$ and $1\le j\le s$, we 
write
$$\phi_i(\bfx)=\sum_{j=1}^sa_{ij}x_j^i\quad (1\le i\le 3),$$
and we consider the Diophantine system
\begin{equation}\label{8.1}
\phi_i(\bfx)=0\quad (1\le i\le 3).
\end{equation}
We write $N(B)$ for the number of integral solutions of the system (\ref{8.1}) with $|\bfx|\le B$. We next 
define the (formal) real and $p$-adic densities associated with the system (\ref{8.1}), following Schmidt 
\cite{Sch1985}. When $L>0$, define
$$\lam_L(\eta)=\begin{cases}L(1-L|\eta|),&\text{when $|\eta|\le L^{-1}$,}\\
0,&\text{otherwise.}
\end{cases}$$
We then put
$$\mu_L=\int_{|\bfxi|\le 1}\prod_{i=1}^3\lam_L(\phi_i(\bfxi))\d\bfxi .$$
The limit $\sig_\infty =\underset{L\rightarrow \infty}{\lim}\mu_L$, when it exists, is called the {\it real 
density}. Meanwhile, given a natural number $q$, we write
$$M(q)=\text{card}\{ \bfx\in (\dbZ/q\dbZ)^s:\phi_i(\bfx)\equiv 0\mmod{q}\ (1\le i\le 3)\}.$$
For each prime number $p$, we then put
$$\sig_p=\lim_{H\rightarrow \infty}p^{H(3-s)}M(p^H),$$
provided that this limit exists, and we refer to $\sig_p$ as the {\it $p$-adic density}.

\begin{theorem}\label{theorem8.2}
Let $s$ be a natural number with $s\ge 13$. Suppose that $a_{ij}$ $(1\le i\le 3,\, 1\le j\le s)$ are 
non-zero integers. Suppose, in addition, that the system of equations (\ref{8.1}) possess non-singular real 
and $p$-adic solutions for each prime number $p$. Then one has
$$N(B)\sim \sig_\infty \Bigl(\prod_p\sig_p\Bigr) B^{s-6}.$$
In particular, the system (\ref{8.1}) satisfies the Hasse principle.
\end{theorem}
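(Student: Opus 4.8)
The plan is to run the Hardy--Littlewood circle method in three dimensions, exploiting the fact that the diagonal shape of (\ref{8.1}) makes the relevant generating function factor completely over the $s$ variables. For $1\le j\le s$ put
$$f_j(\bfalp;B)=\sum_{\abs{x}\le B}e(\alp_1a_{1j}x+\alp_2a_{2j}x^2+\alp_3a_{3j}x^3),$$
so that orthogonality gives $N(B)=\oint \prod_{j=1}^sf_j(\bfalp;B)\d\bfalp$. I would fix a small positive number $\nu$ and dissect $[0,1)^3=\grM\cup\grm$, where $\grM$ is the union of the boxes $\abs{\alp_i-a_i/q}\le B^{\nu-i}$ $(i=1,2,3)$ taken over all $q$ and $\bfa$ with $1\le q\le B^\nu$, $1\le a_i\le q$ and $(q,a_1,a_2,a_3)=1$, and $\grm$ is the complementary set of minor arcs; for $\nu$ small enough the boxes are pairwise disjoint.

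The minor arcs are where Theorem \ref{theorem1.1} enters, and it is this step that forces the hypothesis $s\ge 13$. Since the coefficients $a_{ij}$ are all non-zero, a standard argument---apply Weyl's inequality to whichever of the three coefficients of $f_j$ is badly approximable, having first pruned away the contribution of any well-approximable coefficients by splitting the summation over $x$ into arithmetic progressions---yields $\sup_{\bfalp\in\grm}\abs{f_j(\bfalp;B)}\ll B^{1-\rho+\eps}$ for some positive number $\rho=\rho(\nu)$. On the other hand, after the usual change of variables, Theorem \ref{theorem1.1}---equivalently, Corollary \ref{corollary7.3}---delivers $\oint \abs{f_j(\bfalp;B)}^{12}\d\bfalp\ll B^{6+\eps}$. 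Retaining twelve of the generating functions, estimating the remaining $s-12$ trivially by their supremum over $\grm$, and applying the arithmetic--geometric mean inequality to the retained factors, one obtains
$$\int_{\grm}\prod_{j=1}^s\abs{f_j(\bfalp;B)}\d\bfalp\ll B^{(1-\rho)(s-12)+6+\eps}.$$
Since $s\ge 13$ the right-hand side is $O(B^{s-6-\rho+\eps})$, which is $o(B^{s-6})$ once $\eps$ is taken small enough.

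The major arc analysis proceeds along entirely classical lines. Writing $\bfalp=\bfa/q+\bfbet$ on $\grM$, one approximates $f_j(\bfalp;B)$ by $q^{-1}S_j(q,\bfa)v_j(\bfbet;B)$, where
\begin{align*}
S_j(q,\bfa)&=\sum_{r=1}^qe((a_1a_{1j}r+a_2a_{2j}r^2+a_3a_{3j}r^3)/q),\\
v_j(\bfbet;B)&=\int_{-B}^Be(\bet_1a_{1j}\gam+\bet_2a_{2j}\gam^2+\bet_3a_{3j}\gam^3)\d\gam.
\end{align*}
Substituting these approximations, multiplying out and integrating, the main term factors as a product of a truncated singular series $\sum_{q\le B^\nu}q^{-s}\sum_{\bfa}\prod_jS_j(q,\bfa)$ and a truncated singular integral which, after rescaling via $\gam=Bu$ and $\tau_i=B^i\bet_i$, equals $B^{s-6}$ times a truncation of an absolutely convergent singular integral attached to the system. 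One then completes both truncations: convergence of the singular series follows from the Weil bound (see \cite{Weil1948}) for the cubic exponential sums to prime moduli together with the routine estimates for prime-power moduli, as in the discussion following Theorem \ref{theorem8.1}, while convergence of the singular integral follows from van der Corput's estimate for the underlying oscillatory integral---the condition $s\ge 13$ being comfortably more than enough in either case. In this way the completed singular series is identified with $\prod_p\sig_p$ and the completed singular integral with $\sig_\infty$, and on controlling the error made in approximating the $f_j$ on $\grM$ (again a standard matter for $\nu$ small) one arrives at $\int_{\grM}\prod_jf_j(\bfalp;B)\d\bfalp=\sig_\infty\bigl(\prod_p\sig_p\bigr)B^{s-6}+o(B^{s-6})$. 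The hypothesised non-singular real and $p$-adic solutions of (\ref{8.1}) then guarantee, by Hensel's lemma and the implicit function theorem in the familiar way (cf. \cite{Sch1985}), that $\sig_\infty>0$ and $\sig_p>0$ for every prime $p$.

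Adding the two contributions yields $N(B)=\sig_\infty\bigl(\prod_p\sig_p\bigr)B^{s-6}+o(B^{s-6})$, and positivity of the leading constant forces $N(B)\gg B^{s-6}$ for all large $B$, so that (\ref{8.1}) possesses infinitely many integral solutions and the Hasse principle holds. The only genuinely substantive input beyond textbook circle method is the minor arc bound, which rests entirely on Theorem \ref{theorem1.1}; before that estimate the bound $\oint\abs{f_j(\bfalp;B)}^{12}\d\bfalp\ll B^{6+\eps}$ was unavailable, so that more than twelve generating functions had to be retained and a larger value of $s$ was demanded. Accordingly, I expect the main (purely technical) obstacle to be the minor arc pruning cascade, together with the bookkeeping of the major arc error terms; both are handled in the customary manner by choosing the dissection parameter $\nu$ sufficiently small in terms of $\rho$ and $s$.
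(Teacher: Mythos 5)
Your outline is correct and is precisely the argument the paper has in mind: it declines to give details, asserting only that the proof is standard and mirrors that of Theorem \ref{theorem8.1}, i.e.\ a three-dimensional Hardy--Littlewood dissection in which the minor arcs are handled by combining a Weyl/pruning pointwise bound with the twelfth-moment estimate supplied by Theorem \ref{theorem1.1} (whence $s\ge 13$, the convexity barrier), and the major arcs, singular series and singular integral are treated classically as in \cite[Chapter 7]{Vau1997} and \cite[\S 9]{Woo2012a}. No substantive discrepancy with the paper's intended proof.
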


The argument of the proof here is essentially standard, mirroring that of the proof of Theorem 
\ref{theorem8.1}, and we therefore offer no details. Here, the work of \cite[Chapter V]{Hua1947} 
combines with the methods of \cite[Chapter 7]{Vau1997} to deliver such a conclusion for $s\ge 17$. Our 
present work, in which we require only $s\ge 13$, achieves the limit imposed by the convexity barrier in 
this problem (see \cite{BW2014}). The latter is a practical requirement in applications of the circle method 
for higher degree problems imposed by square-root cancellation considerations for exponential sums, and 
in this instance requires the number of variables $s$ to exceed twice the sum of degrees in the problem.

\bibliographystyle{amsbracket}
\providecommand{\bysame}{\leavevmode\hbox to3em{\hrulefill}\thinspace}

\end{document}